\theoremstyle{plain}
\newtheorem{theorem}{Theorem}
\newtheorem{corollary}{Corollary}
\newtheorem{proposition}{Proposition}
\newtheorem*{thm}{Theorem}
\newtheorem{lemma}{Lemma}
\theoremstyle{definition}
\newtheorem{definition}{Definition}
\newtheorem{remark}{Remark}
\newtheorem*{question}{Question}
\newtheorem*{ack}{Acknowledgements}
\begin{document}
\title[Flat surfaces of finite area and ergodicity]
      {On the Ergodicity of Flat Surfaces of Finite Area}
\author{Rodrigo Trevi\~no}
\address{School of Mathematical Sciences\\
         Tel Aviv University\\
         Tel Aviv, Israel}
\email{rtrevino@math.cornell.edu}
\thanks{This work was supported by the Brin and Flagship Fellowships at the University of Maryland.}
\keywords{Flat surfaces, translation flows, Masur's criterion, Teichm\"{u}ller dynamics}
\date{\today}
\begin{abstract}
We prove some ergodic theorems for flat surfaces of finite area. The first result concerns such surfaces whose Teichm\"{u}ller orbits are recurrent to a compact set of $SL(2,\mathbb{R})/SL(S,\alpha)$, where $SL(S,\alpha)$ is the Veech group of the surface. In this setting, this means that the translation flow on a flat surface can be renormalized through its Veech group. This result applies in particular to flat surfaces of infinite genus and finite area. Our second result is an criterion for ergodicity based on the control of deforming metric of a flat surface. Applied to  translation flows on compact surfaces, it improves and generalizes a theorem of Cheung and Eskin \cite{cheung-eskin}.
\end{abstract}
\maketitle

A flat surface is a two-dimensional oriented manifold $S$ endowed with a flat metric everywhere except on a set of ``bad points'' $\Sigma$ or singularities, which are forced to exist by the topology of the surface if the surface is of genus greater than one. Flat surfaces are inextricably connected to quadratic differentials since the latter give a Riemann surface a flat metric and a pair of transverse, measured foliations, called the vertical and horizontal foliations. If the foliations are orientable, which is not always the case, by considering them as flows we suddenly have a dynamical system, called the translation flow. In other words, since the foliations are orientable, the holomorphic 1-form or an Abelian differential $\alpha$ defining the quadratic differential defines a dynamical system on a surface. Thus one can try to derive dynamical and ergodic properties of the flow by studying properties of the Abelian differential. Although we can get two different flows by considering the horizontal or vertical foliations, from now on we shall assume the flow corresponds to the horizontal foliation, which can be thought as being defined along a global direction $\theta_\alpha\in S^1$. This flow preserves an absolutely continuous measure $\omega_\alpha$, singular at $\Sigma$, which is also defined by the Abelian differential. For a very thorough background on flat surfaces, see \cite{MasTab, zorich:intro}.

In the case when the surface is compact, the point of view of looking at a quadratic differential in order to derive properties of the dynamical system which it defines is a rather favorable one, as the ``right'' space of \emph{all} quadratic differentials on a fixed Riemann surface of genus $g$ is a finite dimensional space. This ``right'' space is the moduli space of quadratic differentials, or moduli space for short. It is the ``right'' space because it has many convenient properties. For example, it is the space of classes of conformally-equivalent flat metrics on a Riemann surface, it is a topological space homeomorphic to an open ball of dimension $6g-6$ (where $g$ is the genus of the surface), and it is equipped with an absolutely continuous $SL(2,\mathbb{R})$-invariant probability measure  \cite{Masur:IET, veech-teich}.  Properties of the translation flow on a compact flat surface can be derived from an associated dynamical system on moduli space, namely, the action of the diagonal subgroup of $SL(2,\mathbb{R})$ on the moduli space, also known as the Teichm\"{u}ller flow. This action has a geometric representation by giving a one-parameter family of flat surfaces which are obtained by deforming the flat metric in a very spcific manner. This deformation is called the Teichm\"{u}ller deformation.


The question of ergodicity of the translation flow on a compact flat surface is addressed by studying the beautiful interplay between the dynamics on a compact flat surface and that of the Teichm\"{u}ller flow on the moduli space of quadratic differentials. The relationship between the dynamics of the translation flow on a flat Riemann surface and that of the dynamics of the Teichm\"{u}ller flow on the moduli space of quadratic differentials is given by a well-known result of Masur:
\begin{thm}[Masur's criterion \cite{Masur:IET, masur:dim}]
If the translation flow on a flat Riemann surface is minimal and not uniquely ergodic, then the Teichm\"{u}ller orbit (of the class of that flat metric on which our translation flow is defined) leaves every compact set of the moduli space.
\end{thm}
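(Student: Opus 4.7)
The plan is to prove the contrapositive: I assume the Teichm\"{u}ller orbit $\{g_t\alpha\}_{t\ge 0}$ is recurrent to a compact set $K$ of moduli space and deduce that a minimal horizontal flow on $\alpha$ is uniquely ergodic. The first step is Mumford's compactness criterion (in the form for strata of Abelian differentials due to Masur--Smillie), which converts recurrence to $K$ into a uniform constant $\epsilon_0>0$ together with a sequence $t_n\to\infty$ such that every saddle connection on $g_{t_n}\alpha$ has flat length at least $\epsilon_0$.

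For the contradiction step, I suppose that the minimal flow admits two distinct ergodic probability measures $\mu_1\neq\mu_2$, and aim to produce, for infinitely many $n$, a saddle connection on $g_{t_n}\alpha$ of length less than $\epsilon_0$. The heuristic to exploit is that two distinct ergodic measures give rise to a non-trivial cycle $\gamma\in H_1(S,\Sigma;\mathbb Z)$---representable by a concatenation of saddle connections---whose horizontal period $\int_\gamma\alpha$ is anomalously small relative to its vertical period. Since $g_t$ contracts horizontal periods by $e^{-t}$, the horizontal span of $\gamma$ on $g_{t_n}\alpha$ shrinks rapidly, and a pigeonhole argument over the component saddle connections forces at least one of them to drop below $\epsilon_0$.

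The hard part will be making the step ``non-unique ergodicity produces a short cycle'' quantitative and compatible with a prescribed sequence $t_n$. Two implementations are natural. The combinatorial route uses Rauzy--Veech induction: two distinct ergodic measures correspond to two distinct rays in the cone of positive invariant vectors of the induction cocycle, and the resulting disparity produces Rohlin towers whose rectangle widths, transported to $g_{t_n}\alpha$, collapse faster than any fixed constant. The analytic route uses a Hodge norm $\|\cdot\|_t$ along $g_t\alpha$: the cohomology class dual to $\mu_1-\mu_2$ lies in the weak-stable direction for $g_t$, so $\|[\gamma]\|_t$ decays like $e^{-t}$; compactness of $K$ makes $\|\cdot\|_t$ comparable to the saddle-connection length norm, again forcing the systole to shrink. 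Either way, the bounded geometry guaranteed by $K$ is precisely what promotes the abstract measure-theoretic obstruction to a concrete metric one on the renormalized surface.
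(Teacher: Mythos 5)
This theorem is quoted in the paper as background and is not proved there: the author cites Masur and only remarks that Theorem \ref{thm:integrability1} yields a new proof by a completely different, functional-analytic route (splitting an invariant $L^2$ function into a coboundary plus a meromorphic part and showing the meromorphic part vanishes). Your outline instead follows the classical geometric strategy, which is a legitimate choice, but it has a genuine gap at its central step.

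The problem is the direction of your degeneration argument. With the paper's normalization $\alpha_t=e^{-t}\Re(\alpha)+ie^{t}\Im(\alpha)$, a saddle connection $\ell$ with holonomy components $H=\int_\ell|\Re\alpha|$ and $V=\int_\ell|\Im\alpha|$ has $\mathrm{length}_{\alpha_t}(\ell)^2=e^{-2t}H^2+e^{2t}V^2\geq e^{2t}V^2$. Hence to defeat the lower bound $\epsilon_0$ supplied by Mumford compactness at time $t_n$ you must exhibit a saddle connection whose \emph{vertical} holonomy is at most $\epsilon_0e^{-t_n}$ (with horizontal holonomy at most $\epsilon_0e^{t_n}$), i.e.\ a nearly horizontal cycle with $HV$ small. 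Your cycle is built the other way around (``horizontal period anomalously small relative to its vertical period''), and the inference ``the horizontal span shrinks, so pigeonhole gives a short saddle connection'' fails outright: contracting the horizontal component does not shrink the flat length, because the vertical component is simultaneously expanded by $e^{t}$. Beyond this reversal, the step you label a heuristic is the actual theorem: from two distinct ergodic measures one must \emph{construct}, for every large $t$, an essential cycle with $V=O(e^{-t})$ and $H=O(e^{t})$. That is the content of Masur's partition argument (the boundaries of the almost-invariant sets separating generic points of $\mu_1$ and $\mu_2$ have small intersection number with the horizontal foliation \emph{and} controlled horizontal extent), and neither of your proposed implementations supplies it. The Hodge-norm variant in particular inverts the logic twice: classes of invariant transverse measures lie in a subspace where the Kontsevich--Zorich cocycle is non-expanding relative to the top exponent, not one where norms decay like $e^{-t}$; and smallness of the Hodge norm of a single cohomology class on a surface of bounded geometry does not force the systole to be small---a short systole produces small classes, not conversely. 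A correct completion would either run Masur's shadowing/uniform-Birkhoff-average argument directly on the convergent subsequence $g_{t_n}\alpha$, or make the Rauzy--Veech Rohlin-tower construction quantitative; as written, the proposal does not reach the contradiction.
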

Moreover, it is known that for almost every $\theta\in S^1$, the translation flow generated by $\alpha_\theta = e^{i\theta}\alpha$ is uniquely ergodic \cite{KMS} and the set of non-ergodic directions has Hausdorff dimension at most $\frac{1}{2}$ \cite{masur:dim}. It also holds that for almost every flat surface of genus greater than one, the translation flow is weakly mixing in almost every direction \cite{AvilaForni:WM}.

Let us denote a flat surface by $(S,\alpha)$. There are very special flat surfaces whose $SL(2,\mathbb{R})$ orbit in moduli space is three-dimensional. These flat surfaces are called Veech surfaces and what makes them special is a large collection of ``symmetries'' which preserve the flat structure. These symmetries renormalize the translation flow via the action of the diagonal subgroup of $SL(2,\mathbb{R})$, i.e., the action of the Teichm\"{u}ller flow. For these special surfaces it suffices to study their $SL(2,\mathbb{R})$ orbit to derive dynamical properties of the translation flow on it. In particular, the phase space of the orbit is a three-dimensional manifold, regardless of the genus of the surface, which is in high contrast with the dimension of the phase space in the typical case, since it grows linearly with the genus of the surface. In these special cases, Masur's criterion can be expressed as follows: if the translation flow on a Veech surface is minimal but not uniquely ergodic, then its Teichm\"{u}ller orbit leaves every compact subset of $SL(2,\mathbb{R})/SL(S,\alpha)$, where $SL(S,\alpha)$ is the large collection of symmetries already mentioned (and defined in \S \ref{subsec:SL2R}), called the \emph{Veech group} of the surface $S$. Veech surfaces have the additional property of satisfying the \emph{Veech dichotomy}: the translation flow in any direction $\theta$ on a Veech surface is either completely periodic or uniquely ergodic. By completely periodic we mean that all orbits which do not emanate from singularities are closed.

Since the dynamics of finite-genus flat surfaces are by now very well understood, there has been a recent surge in the study of the dynamics of the translation flow on flat surfaces of infinite genus \cite{chamanara,pascal-gabriela,infinite-staircase,bowman, hooper:lattice,hooper:measures,corinna, RalstTroub}. In this case, much of the structure from the finite-genus theory is lost. In particular, there is no well-defined notion of moduli spaces which allow us to carry out an analogous study and thus most results so far about the ergodicity of the translation flow on a flat surface of infinite genus are done in a case-by-case scenario. A common approach for all of the examples known and studied is the genus-independent approach already used in the finite genus case, that is, by exploiting the properties given by the Veech group of the surface. 

There are two types of infinite-genus flat surfaces that can be considered: flat surfaces with finite area and those with infinite area. At the moment, it seems there are more results for the ergodicity of the translation flow in the case of infinite area flat surfaces of infinite genus. Most of these surfaces are $\mathbb{Z}^d$ branched coverings of surfaces of finite area and one can recover some information about the dynamics on the cover from the dynamics on the finite-genus surface being covered. There are some infinite genus flat surfaces of finite area in the literature with non-trivial Veech groups, but there has been no unifying approach in these cases to prove ergodicity of the translation flow, although the results of \cite{hooper:measures} are a step in this direction.

In this paper we give a general proof of the ergodicity of the translation flow for infinite genus flat surfaces of finite area with sufficiently large Veech group. In spirit, our theorem is very much like Masur's criterion. The main result is the following. 
\begin{theorem}
\label{thm:main}
Let $(S,\alpha)$ be a flat surface of finite area whose Teichm\"{u}ller orbit does not leave every compact set of $SL(2,\mathbb{R})/SL(S,\alpha)$. Then the translation flow is ergodic with respect to Lebesgue measure.
\end{theorem}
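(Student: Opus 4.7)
The plan is to argue by contradiction, mimicking the outline of Masur's criterion in the opposite direction and in a form adapted to the infinite-genus setting where the machinery of Hodge theory on a compact moduli space is unavailable. The heuristic is that non-ergodicity of the horizontal flow produces extra flow-invariant data with a well-defined ``vertical cohomological content,'' and Teichm\"uller recurrence combined with the contraction properties of $g_t$ forces this content to vanish.

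Suppose the translation flow is not ergodic with respect to $\omega_\alpha$. Then there is a flow-invariant Borel set $A$ with $0<\omega_\alpha(A)<\omega_\alpha(S)$, and the signed measure
\[
\nu \;=\; \omega_\alpha|_A - \tfrac{\omega_\alpha(A)}{\omega_\alpha(S)}\,\omega_\alpha
\]
is nonzero, flow-invariant, absolutely continuous with respect to $\omega_\alpha$, and of total mass zero. Disintegrating $\nu$ along horizontal leaves yields a signed transverse cocycle $\tilde\nu$ for the horizontal foliation, a finitely additive leaf-invariant assignment on transverse arcs, which in the compact finite-genus case would be a class in $H^1(S,\Sigma;\mathbb{R})$ with vanishing horizontal period---purely ``vertical'' in the Hodge sense. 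The key elementary estimate is that, since $|\nu|\le\omega_\alpha$, comparison on thin flow boxes gives $|\tilde\nu(\tau)|\le\mathrm{length}_\alpha(\tau)$ for every transverse arc $\tau$, where length is measured in the flat metric of $\alpha$.

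Next I would exploit the interaction of $\tilde\nu$ with the Teichm\"uller deformation composed with the Veech symmetries. By hypothesis there exist $t_n\to\infty$ and $\gamma_n\in SL(S,\alpha)$ such that $g_{t_n}\gamma_n$ lies in a compact set $K\subset SL(2,\mathbb{R})$. Each $\gamma_n$ is the derivative of an affine automorphism $\phi_n$ of $(S,\alpha)$, so $\phi_n^{-1}\circ g_{t_n}$ identifies $(S,g_{t_n}\alpha)$ with a $K$-bounded deformation of $(S,\alpha)$. Transporting the Step~1 estimate through this identification, transverse arcs on $(S,\alpha)$ of bounded $\alpha$-length correspond to arcs on $(S,g_{t_n}\alpha)$ on which the vertical component of $\tilde\nu$ has been contracted by $e^{-t_n}$ (while lengths measured in $g_{t_n}\alpha$ remain controlled by $K$). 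Because $\gamma_n$ is a symmetry of the flat structure, the pulled-back cocycle $\phi_n^*\tilde\nu$ equals $\tilde\nu$, so the contraction applies to $\tilde\nu$ itself. Letting $n\to\infty$ forces $\tilde\nu\equiv 0$ on every transverse arc of bounded length, hence $\nu=0$, contradicting the construction.

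The main obstacle is making the contraction estimate of the third step rigorous in infinite genus. We have no compact moduli space, no finite-dimensional Hodge bundle, and no Lyapunov spectrum for a Kontsevich--Zorich cocycle to invoke; the argument must instead be surface-geometric, bounding $|\tilde\nu(\tau)|$ uniformly over transverse arcs of bounded flat length and uniformly over all $K$-bounded linear deformations of the flat structure. The recurrence hypothesis to a compact set of $SL(2,\mathbb{R})/SL(S,\alpha)$ enters precisely to provide the bounded geometry---no collapsing saddle connections, bounded distortion of the flat metric up to Veech symmetries---required for such an estimate to hold with constants independent of $n$. Executing this estimate and carefully transferring the contraction through the affine symmetries $\phi_n$, so that transverse mass is not concentrated onto shrinking sets, is where the bulk of the technical work must go.
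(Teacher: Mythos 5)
Your proposal takes a genuinely different route from the paper (which works with $L^2$ invariant functions, the orthogonal splitting $u=\partial_t^{\pm}v_t^{\pm}+m_t^{\mp}$ by Cauchy--Riemann operators, the monotonicity $\frac{d}{dt}\|m_t^{\pm}\|^2=4\|\Im(m_t^{\pm})\|^2$, and normal families of meromorphic functions pulled back by the affine maps), but as written it has a gap that I do not think can be repaired along the lines you indicate. The pivotal claim ``because $\gamma_n$ is a symmetry of the flat structure, the pulled-back cocycle $\phi_n^*\tilde\nu$ equals $\tilde\nu$'' is false in general. The affine automorphism $\phi_n$ preserves the flat structure, but its derivative $\gamma_n=g_{-t_n}k_n$ is far from upper triangular, so $\phi_n$ does not even preserve the horizontal foliation, let alone the particular invariant set $A$; the transverse cocycle $\tilde\nu$ is a dynamical object attached to the horizontal flow and to the choice of $A$, not to the flat structure, and the Veech group has no reason to act trivially on it. Without this invariance, the contraction you apply to $\phi_n^*\tilde\nu$ says nothing about $\tilde\nu$ itself, and the contradiction evaporates.

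Beyond that, the step you defer to ``the bulk of the technical work'' is precisely the content of the theorem. In finite genus, the passage from ``the vertical period is contracted by $e^{-t_n}$ while flat lengths stay bounded'' to ``$\tilde\nu=0$'' is exactly where one invokes the finite dimensionality of $H^1(S,\Sigma;\mathbb{R})$, the Hodge norm, and compactness of the projectivized space of invariant transverse measures over a compact part of moduli space. In infinite genus none of these are available: the space of invariant transverse cocycles may be infinite dimensional, and a pointwise bound $|\tilde\nu(\tau)|\le\mathrm{length}_\alpha(\tau)$ on arcs of bounded length does not pass to a norm that the Teichm\"uller deformation uniformly contracts. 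The paper circumvents this entirely by replacing cohomological data with the meromorphic component $m_t^{+}$ of an invariant $L^2$ function, whose norm is monotone in $t$ by an exact computation and which can be shown to vanish along recurrent times using the Cauchy integral formula and the Arzel\`a--Ascoli theorem; the residual part $\partial_t^{\pm}v_t^{\pm}$ is then killed by a separate continuity-plus-recurrence argument. I would encourage you to compare your contraction heuristic with Lemma \ref{lem:pos} and Proposition \ref{prop:cor}, which realize the same intuition in a form that survives the absence of a moduli space.
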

In style, however, our theorem is different from Masur's criterion since the methods are quite different. In particular, it is not clear from this approach that unique ergodicity can be proved in this generality.

Theorem \ref{thm:main} applies to all of the known flat surfaces of infinite genus and finite area with non-trivial Veech groups \cite{chamanara,bowman,hooper:measures}. For some of these examples Theorem \ref{thm:main} gives ergodicity for the translation flow on surfaces where other methods provide no information about the ergodic properties of the flow and thus proves to be useful as a general tool, readily applicable to any new examples of flat surfaces of finite area and infinite genus with a non-trivial Veech group. 


Our second result is an ergodicity criterion based on the integrability of geometric quantities of a flat surface being deformed by the Teichm\"{u}ller deformation. In contrast to Theorem \ref{thm:main}, this result does not rely on the existence of a Veech group nor on any space of surfaces to which the flat surface belongs. We consider this criterion to be a form of geometric Khinchin-type criterion which implies unique ergodicity.

By $dist_t(x,y)$ we will refer to the distance between $x,y\in S$ with respect to the flat metric on $(S,\alpha_t)$, the one-parameter family of flat surfaces obtained from $(S,\alpha)$ by Teichm\"{u}ller deformation.
\begin{theorem}
\label{thm:integrability2}
Let $(S,\alpha)$ be a flat surface of finite area. Suppose that for any $\eta>0$ there exist a function $t\mapsto \varepsilon(t)>0$, a one-parameter family of subsets
$$S_{\varepsilon(t),t} = \bigsqcup_{i=1}^{C_t}S_t^i$$ 
of $S$ made up of $C_t < \infty$ path-connected components, each homeomorphic to a closed orientable surface with boundary, and functions $t\mapsto \mathcal{D}_t^i>0$, for $1\leq i \leq C_t$, such that for 
$$\Gamma_t^{i,j} = \{\mbox{paths connecting }\partial S_t^i \mbox{ to }\partial S_t^j\}$$
and
\begin{equation}
\label{eqn:systole}
\delta_t = \min_{i\neq j} \sup_{\gamma\in\Gamma_t^{i,j} }\mbox{dist}_t(\gamma,\Sigma)
\end{equation}
the following hold:
\begin{enumerate}
\item  $\omega_\alpha(S\backslash S_{\varepsilon(t), t}) < \eta$ for all $t>0$,
\item $\mbox{dist}_t(\partial S_{\varepsilon(t),t},\Sigma) > \varepsilon(t)$ for all $t>0$,
\item the diameter of each $S_t^i$, measured with respect to the flat metric on $(S,\alpha_t)$, is bounded by $\mathcal{D}_t^i$ and
\begin{equation}
\label{eqn:integrability2}
\int_0^\infty \left( \varepsilon(t)^{-2}\sum_{i=1}^{C_t}\mathcal{D}_t^i + \frac{C_t-1}{\delta_t}\right)^{-2}\, dt = +\infty.
\end{equation}
\end{enumerate}
Moreover, suppose the set of points whose translation trajectories leave every compact subset of $S$ has zero measure. Then the translation flow is ergodic.
\end{theorem}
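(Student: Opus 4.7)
The plan is to prove ergodicity by contradiction. Assume there is a bounded horizontally-invariant $f\in L^2(S,\omega_\alpha)$ with $\int f\,d\omega_\alpha=0$ and $\|f\|_2>0$; I will derive a contradiction with the divergence in \eqref{eqn:integrability2}. In the spirit of Masur--Smillie and Cheung--Eskin, I would construct, for each $t>0$, a piecewise-constant approximation $f_t$ of $f$ using the partition $\{S_t^i\}$, and control both $\|f-f_t\|_2$ and $\|f_t\|_2$ in terms of the geometric quantity
\[
X(t)\;=\;\varepsilon(t)^{-2}\sum_{i=1}^{C_t}\mathcal{D}_t^i\;+\;\frac{C_t-1}{\delta_t}
\]
appearing in \eqref{eqn:integrability2}.

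First I would define $f_t$ as the conditional expectation of $f$ onto the partition $\{S_t^i\}\cup\{S\setminus S_{\varepsilon(t),t}\}$. Condition (i) bounds the contribution from $S\setminus S_{\varepsilon(t),t}$ by $\eta\|f\|_\infty^2$. Inside each $S_t^i$, condition (iii) provides the diameter bound $\mathcal{D}_t^i$ in the metric $\alpha_t$, while condition (ii) keeps singularities at distance $\varepsilon(t)$ from $\partial S_t^i$. After Teichm\"uller deformation, $\alpha$-horizontal leaves become long geodesics in $(S,\alpha_t)$ along which $f$ is constant. A Poincar\'e-type inequality on each $S_t^i$, whose constant behaves like $\mathcal{D}_t^i/\varepsilon(t)^2$ because the thinnest neck near $\partial S_t^i$ has width $\varepsilon(t)$, translates horizontal invariance into an estimate $\|(f-\bar f_t^i)\mathbf{1}_{S_t^i}\|_2^2\lesssim (\mathcal{D}_t^i/\varepsilon(t)^2)^2\,\mathcal{E}_i(f)$, where $\mathcal{E}_i(f)$ is the vertical Dirichlet energy of $f$ on $S_t^i$. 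Summing over $i$ recovers the first summand of $X(t)$.

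Next, to bound $\|f_t\|_2$, I would chain the component averages $\bar f_t^i$ through a spanning tree of paths in the families $\Gamma_t^{i,j}$ realising the systole \eqref{eqn:systole}. Each such path avoids $\Sigma$ by at least $\delta_t$, so a tubular Poincar\'e inequality with constant $O(\delta_t^{-1})$ compares neighbouring averages; telescoping along the $C_t-1$ edges of the tree, together with $\int f=0$, yields $\|f_t\|_2\lesssim \sqrt{\eta}\,\|f\|_\infty+(C_t-1)\delta_t^{-1}\|f\|_\infty$, which accounts for the second summand of $X(t)$. Combining the two steps gives a bound of the form $\|f\|_2^2\lesssim \eta+X(t)^2\,K(f)$, where $K(f)$ is a finite ``energy'' built from $\|f\|_\infty$ and the vertical Dirichlet form of $f$. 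The hypothesis that trajectories escaping every compact set form a null set is used here to make the translation flow almost-everywhere defined and to justify, by a density argument, that $f$ may be taken with finite such energy.

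Finally, to convert the pointwise-in-$t$ bound into a contradiction with \eqref{eqn:integrability2}, I would integrate against a suitable weight: a continuous Borel--Cantelli-type argument along the Teichm\"uller flow organises the $L^2$-approximation errors into a martingale-like family whose $L^2$-convergence, given that $\|f\|_2\neq 0$, forces $\int_0^\infty X(t)^{-2}\,dt<\infty$, contradicting \eqref{eqn:integrability2}. The main obstacle is the delicate Poincar\'e/Cheeger inequality on the flat singular surface, producing precisely the $\varepsilon(t)^{-2}$ and $\delta_t^{-1}$ scalings from purely geometric data; a secondary challenge is carrying out the Borel--Cantelli-type step without any a priori regularity for $f$ beyond horizontal invariance, which requires regularising $f$ along vertical leaves while exploiting the invariance of $\omega_\alpha$ under the horizontal flow.
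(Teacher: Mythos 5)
There is a genuine gap at the heart of your argument: the quantity $K(f)$, built from ``the vertical Dirichlet energy of $f$'', is in general infinite. An $X$-invariant function in $L^2_\alpha(S)$ has no a priori transverse regularity whatsoever --- if the flow fails to be ergodic, the natural invariant function is the characteristic function of an invariant set, which is not in $H^1$ and has no finite vertical Dirichlet form on any $S_t^i$. Regularising $f$ along vertical leaves destroys its horizontal invariance, so the estimates do not close. Your Poincar\'{e} inequalities with constants $\mathcal{D}_t^i/\varepsilon(t)^2$ and $\delta_t^{-1}$ are therefore applied to an object for which they yield nothing. The paper circumvents this entirely by never estimating $u$ directly: it splits $u = \partial_t^\pm v_t^\pm + m_t^\mp$ using the time-$t$ Cauchy--Riemann operators and runs all the geometry on the \emph{meromorphic part} $m_t^\pm = R_t \pm iI_t$. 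For meromorphic functions the Cauchy/Poisson integral formula gives the interior gradient bound $\|\nabla_t I_t\|_{L^\infty(S_{\varepsilon(t),t})} \leq 8\varepsilon(t)^{-2}\|I_t\|$ for free --- this is where the $\varepsilon(t)^{-2}$ scaling actually comes from, and it is special to harmonic components, not a Cheeger-type inequality for arbitrary $H^1$ functions. The finiteness of the ``total energy'' is then not an assumption on $f$ but the monotonicity formula of Lemma \ref{lem:pos}: $\frac{d}{dt}\|m_t^\pm\|^2 = 4\|\Im(m_t^\pm)\|^2$ together with $\|m_t^\pm\|\leq\|u\|$ gives $\int_0^\infty \|I_t\|^2\,dt < \infty$, and comparison with the divergence of $\int_0^\infty X(t)^{-2}\,dt$ forces $\liminf_t X(t)\|I_t\| = 0$ by a direct two-line contradiction --- no Borel--Cantelli or martingale argument is needed or available.

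Two further points. First, even after one shows (as the paper does) that the meromorphic part vanishes identically, the proof is not finished: one must still conclude that $u = \partial_t^\pm v_t^\pm$ with $v_t^\pm$ purely imaginary and $X$-invariant forces $u=0$ a.e.; this uses Sobolev embedding on flow boxes to get continuity of $v_t^\pm$ and then evaluates $u$ along returns of recurrent orbits. This is precisely where the hypothesis that escaping trajectories form a null set enters (together with the absence of cylinders, which is itself ruled out by the integrability condition), not merely ``to make the flow almost-everywhere defined'' as you suggest. Second, your chaining of component averages along a spanning tree of paths realising $\delta_t$ is in the right spirit --- the paper does exactly this for the oscillation of $R_t$ and $I_t$, obtaining $|R_t(z_i)-R_t(z_j)| \leq 16\delta_t^{-1}\|I_t\|$ across components --- but again the estimate is an $L^\infty$ oscillation bound for the harmonic components obtained by integrating their pointwise gradient bound $8\,\mathrm{dist}_t(\cdot,\Sigma)^{-2}\|I_t\|$ along the path, not a tubular Poincar\'{e} inequality for $f$.
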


In the special case that $(S,\alpha)$ is a compact surface, the theorem above can be expressed as follows. We denote now by $\delta_t$ the \emph{systole} of a surface evolving under the Teichm\"{u}ller flow. The systole is the length (measured with the flat metric on $(S,\alpha_t)$, the orbit of $(S,\alpha)$ under the Teichm\"{u}ller flow) of the shortest homotopically non-trivial closed curve of $S$.
\begin{theorem}
\label{thm:integrability1}
Let $S$ be a compact flat surface of finite area. If
\begin{equation}
\label{eqn:integrability}
\int_0^\infty\delta_t^2\,dt = \infty,
\end{equation}
then the translation flow is uniquely ergodic.
\end{theorem}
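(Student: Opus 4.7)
The plan is to derive Theorem \ref{thm:integrability1} from Theorem \ref{thm:integrability2} by exhibiting a suitable family of subsurfaces. Since $S$ is compact, the non-escape hypothesis of Theorem \ref{thm:integrability2} is automatic.

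Given $\eta>0$, I would first fix a constant $\varepsilon_0>0$ small enough (uniformly in $t$) that the $\omega_\alpha$-measure of the $\varepsilon_0$-neighborhood $N^t_{\varepsilon_0}(\Sigma)$ of $\Sigma$ in the flat metric on $(S,\alpha_t)$ is less than $\eta$; this is possible because a metric ball of radius $r$ around a cone point of angle $2\pi(k+1)$ has area $\pi r^2(k+1)$, while both the cone angles and $\omega_\alpha$ are preserved by the Teichm\"uller deformation. I then set $\varepsilon(t):=\varepsilon_0/2$ and take $S_{\varepsilon(t),t}:=S\setminus N^t_{\varepsilon_0}(\Sigma)$, so that its boundary lies at distance $\varepsilon_0>\varepsilon(t)$ from $\Sigma$; let $S_t^1,\dots,S_t^{C_t}$ be its connected components.

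The geometric core of the argument is to bound the quantities appearing in \eqref{eqn:integrability2} in terms of the systole $\delta_t$ of $(S,\alpha_t)$ (as in Theorem \ref{thm:integrability1}). Standard thick-thin estimates for compact flat surfaces give: (a) the number of components $C_t$ is bounded by a topological constant depending only on the genus and singularity data; (b) the diameter of each $S_t^i$ satisfies $\mathcal{D}_t^i\leq C(1+\delta_t^{-1})$, since the thick part of $(S,\alpha_t)$ has bounded intrinsic diameter while each thin cylinder has length at most $\omega_\alpha(S)/\delta_t$; (c) whenever two components are separated by a thin cylinder of width $w\geq\delta_t$, the center of the cylinder produces a path at distance at least $\delta_t/2$ from $\Sigma$, so the quantity in \eqref{eqn:systole} is at least a constant multiple of $\delta_t$. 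Assembling these, the integrand in \eqref{eqn:integrability2} is bounded below by $c\,\delta_t^2$ for a constant $c$ depending only on topology, $\varepsilon_0$, and the uniform upper bound on $\delta_t$ coming from the fixed area. The hypothesis $\int_0^\infty\delta_t^2\,dt=\infty$ then forces the integrability condition of Theorem \ref{thm:integrability2}, yielding ergodicity of the translation flow with respect to $\omega_\alpha$.

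The main obstacle I anticipate is the final step: Theorem \ref{thm:integrability2} only asserts ergodicity with respect to $\omega_\alpha$, whereas Theorem \ref{thm:integrability1} asserts unique ergodicity. On a compact translation surface the simplex of invariant probability measures is finite-dimensional (Katok), and I would close the gap by arguing, in the spirit of Cheung--Eskin \cite{cheung-eskin}, that any ergodic invariant probability measure distinct from $\omega_\alpha$ would produce a non-trivial horizontal cohomology class whose Hodge norm along the Teichm\"uller orbit decays at a rate incompatible with $\int_0^\infty\delta_t^2\,dt=\infty$.
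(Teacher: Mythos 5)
The first half of your proposal -- deducing ergodicity from Theorem \ref{thm:integrability2} by taking $\varepsilon(t)$ constant, letting $S_{\varepsilon(t),t}$ be the complement of a fixed-radius neighborhood of $\Sigma$, bounding $C_t$ by a topological constant, and bounding the diameters by a multiple of $\delta_t^{-1}$ -- is essentially the paper's argument; the paper cites the Masur--Smillie inequality $D_t \leq K/\delta_t$ for the diameter bound, but your thick-thin estimates amount to the same thing, and the conclusion that the integrand in \eqref{eqn:integrability2} is bounded below by $c\,\delta_t^2$ is exactly how the paper converts \eqref{eqn:integrability} into \eqref{eqn:integrability2}.

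The genuine gap is the upgrade from ergodicity to unique ergodicity, which you correctly identify as the main obstacle but then only gesture at. Your proposed route -- that a second ergodic measure would yield a horizontal cohomology class whose Hodge norm decays at a rate incompatible with $\int_0^\infty \delta_t^2\,dt = \infty$ -- is precisely the kind of quantitative estimate that Cheung and Eskin could only establish for some small unspecified $\epsilon$, and nothing in your sketch explains how to get the sharp rate needed here; as written it defers the entire difficulty to an unproven claim. The paper instead closes the gap with a self-contained argument (attributed to Jon Chaika) that reuses the ergodicity statement already proved: given a second invariant measure $\mu$, form $\mu_s = s\omega_\alpha + (1-s)\mu$, use its transverse measure $\Upsilon_s$ to build a homeomorphism $\Phi_s$ that rescales the vertical coordinate, and invoke Hubbard--Masur to realize the resulting translation structure by an Abelian differential $\alpha_s$ with $\omega_{\alpha_s} = (\Phi_s)_*\mu_s$. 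Since $\Phi_s$ preserves horizontal components of curves and only shrinks vertical components by at most the factor $s$, the systole of $g_t(S,\alpha_s)$ satisfies $\delta_t^s \geq s\,\delta_t$, so $(S,\alpha_s)$ again satisfies \eqref{eqn:integrability} and its horizontal flow is ergodic for $\omega_{\alpha_s}$ -- but $\omega_{\alpha_s}$ is by construction a nontrivial convex combination of two mutually singular invariant measures, contradicting extremality of ergodic measures. If you want to complete your proof, you either need to carry out the Hodge-norm decay estimate in full (which is a substantial separate project) or adopt an argument of this bootstrapping type.
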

It seems highly unlikely that (\ref{eqn:integrability}) is a necessary condition for unique ergodicity but we do not know of any counterexamples. The upgrade from ergodicity in Theorem \ref{thm:integrability2} to unique ergodicity in Theorem \ref{thm:integrability1} is obtained through an argument suggested by Jon Chaika, for which we are grateful.

Criteria like Theorem \ref{thm:integrability1} for compact flat surfaces are already found in the literature, notably the Veech-Boshernitzan criterion for unique ergodicity \cite{veech:boshernitzan} which is stated for interval exchange transformations. Our criterion here is more general than the Veech-Boshernitzan criterion and has the advantage of being more transparent since it involves more explicit geometric quantities of the surface evolving under the Teichm\"{u}ller flow. Since the diameter can roughly be expressed in terms of shortest homotopically nontrivial closed curve when the surface is compact, we can formulate the criteria in these terms.

Theorem \ref{thm:integrability1} is particularly interesting when $\lim \delta_t = 0$. Otherwise, $\limsup \delta_t>0$ is treated by Masur's Criterion. Indeed, Theorem \ref{eqn:integrability} yields a new proof for Masur's Criterion. Define $d'(t) = -\log\delta_t'$, where $\delta_t'$ is the length of the shortest saddle connection of $(S,\alpha)$ evolving under the Teichm\"{u}ller flow. Cheung and Eskin \cite{cheung-eskin} proved the following. 
\begin{thm}[\cite{cheung-eskin}]
\label{thm:CheungEskin}
There is an $\epsilon>0$ such that if $d'(t) < \epsilon \log t + C$ for some $C$ and for all $t>0$, then the translation flow is uniquely ergodic.
\end{thm}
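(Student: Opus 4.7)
The plan is to derive Theorem \ref{thm:CheungEskin} as a direct corollary of Theorem \ref{thm:integrability1}, by showing that the Cheung--Eskin hypothesis already forces the integral $\int_0^\infty \delta_t^2\,dt$ to diverge.

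First I would unwrap the definition $d'(t) = -\log \delta_t'$: the hypothesis $d'(t) < \epsilon \log t + C$ is equivalent to the pointwise lower bound $\delta_t' > e^{-C} t^{-\epsilon}$ on the length of the shortest saddle connection of $(S,\alpha_t)$.

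The main step, and essentially the only real content, is the geometric comparison $\delta_t \geq \delta_t'$ on the compact flat surface $(S,\alpha_t)$ (assumed to carry at least one singularity, so that $\delta_t'$ is defined). The systole is achieved by a closed geodesic of one of two types. If it is a concatenation of finitely many saddle connections, then each one of them has length at most $\delta_t$, and the bound is immediate. Otherwise, it is a regular closed geodesic avoiding $\Sigma$; standard flat geometry then places it in the interior of a maximal flat cylinder whose two boundary components are closed loops of length $\delta_t$ built from finitely many saddle connections, so at least one such saddle connection has length at most $\delta_t$. In either case $\delta_t' \leq \delta_t$, and this is the only place in the argument where any geometric verification is needed.

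Finally I would combine the two estimates and invoke Theorem \ref{thm:integrability1}:
$$\int_0^\infty \delta_t^2\,dt \;\geq\; \int_1^\infty (\delta_t')^2\,dt \;\geq\; e^{-2C}\int_1^\infty t^{-2\epsilon}\,dt,$$
and the right-hand side diverges for every $\epsilon \leq 1/2$. Choosing for instance $\epsilon = 1/2$ makes the integrability hypothesis of Theorem \ref{thm:integrability1} hold, and unique ergodicity of the translation flow follows. As a byproduct, this approach simultaneously reproves the original form of Masur's criterion: in the case $\limsup \delta_t' > 0$, the systole $\delta_t$ is bounded below on an unbounded set of times, which is more than enough to make $\int_0^\infty \delta_t^2\,dt$ diverge.
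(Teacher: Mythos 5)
Your proposal is correct and follows essentially the same route as the paper: the paper cites Cheung--Eskin for the general statement but immediately derives the quantitative $\epsilon=\tfrac12$ version from Theorem \ref{thm:integrability1} via exactly the chain $\delta_t \geq \delta_t' > C' t^{-1/2}$, followed by squaring and integrating. Your only addition is the explicit geometric justification of $\delta_t' \leq \delta_t$ (via cylinder boundaries or concatenations of saddle connections), which the paper asserts without proof as $d(t)\leq d'(t)$.
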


It follows from Theorem \ref{thm:integrability1} that it suffices to pick $\epsilon = \frac{1}{2}$ in the above theorem. Indeed, we have that $d(t)\equiv -\log\delta_t \leq d'(t)$ for all $t$, and if 
$$d(t) < \frac{1}{2}\log t + C$$
for some $C>0$, then
$$C't^{-\frac{1}{2}}<\delta_t,$$
where $C'$ is some constant. Squaring and integrating both sides with respect to $t$, the hypothesis of Theorem \ref{thm:integrability1} is satisfied.

\begin{corollary}
Suppose $d'(t) < \frac{1}{2} \log t + C$ for some $C$ and for all $t>0$. Then the translation flow is uniquely ergodic.
\end{corollary}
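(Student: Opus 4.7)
The plan is to deduce this corollary as an immediate application of Theorem \ref{thm:integrability1}. The argument is essentially already spelled out in the paragraph preceding the statement, so my task is to assemble it cleanly and identify the one geometric input that needs to be justified.

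First I would establish the inequality $\delta_t' \leq \delta_t$ between the shortest saddle connection length and the systole on $(S,\alpha_t)$, equivalently $d(t) \leq d'(t)$. The standard justification is that any homotopically nontrivial closed geodesic of length $L$ is the core curve of a maximal flat cylinder, each of whose two boundary components is a concatenation of saddle connections of total length exactly $L$; in particular at least one saddle connection on the boundary has length at most $L$, so $\delta_t' \leq \delta_t$.

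Given this inequality, the hypothesis $d'(t) < \tfrac{1}{2}\log t + C$ immediately yields $d(t) < \tfrac{1}{2}\log t + C$, whence $\delta_t > e^{-C} t^{-1/2}$ for all $t \geq 1$. Squaring gives $\delta_t^2 > e^{-2C} t^{-1}$, and since $\int_1^\infty t^{-1}\,dt = +\infty$ and $\delta_t$ is bounded below on the compact interval $[0,1]$ (the Teichm\"{u}ller flow varies the metric continuously, so the systole cannot vanish on a compact time interval), the integral in \eqref{eqn:integrability} diverges. Theorem \ref{thm:integrability1} then gives unique ergodicity of the translation flow.

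I do not expect any genuine obstacle. The only nontrivial ingredient is the cylinder-based comparison $\delta_t' \leq \delta_t$, which is well known in the translation surfaces literature; once this is granted, the corollary reduces to a one-line exponentiation and an elementary divergence check.
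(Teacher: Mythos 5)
Your proposal is correct and follows essentially the same route as the paper: deduce $d(t)\leq d'(t)$ (equivalently $\delta_t'\leq\delta_t$), exponentiate the hypothesis to get $\delta_t > C' t^{-1/2}$, and check that $\int\delta_t^2\,dt$ diverges so that Theorem \ref{thm:integrability1} applies. The only difference is cosmetic: you supply the standard cylinder/saddle-connection justification for $\delta_t'\leq\delta_t$ and the bound on $[0,1]$, which the paper leaves implicit.
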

Let us recall the \emph{Logarithmic Law for Geodesics} for flat surfaces: let $\mbox{dist}(\alpha,g_t\alpha)$ be the distance in a stratum of the moduli space between a surface carrying an Abelian differential $\alpha$ and the one carrying $g_t\alpha$, which is its orbit under the Teichm\"{u}ller flow. The quantities $\mbox{dist}(\alpha,g_t\alpha)$ and $d'(t)$ are roughly the same (see \cite{masur:loglaw} for the precise relationship).
\begin{thm}[Logarithmic Law for Geodesics in Moduli Space \cite{masur:loglaw}]
For any Abelian differential $\alpha$ on a compact flat surface of finite area, for almost every $\theta\in S^1$,
$$\limsup_{t>0}\frac{\mbox{dist}(\alpha_\theta,g_t\alpha_\theta)}{\log t} = \frac{1}{2}.$$
\end{thm}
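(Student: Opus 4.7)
My plan is to prove the two matching bounds $\limsup_{t\to\infty}\mbox{dist}(\alpha_\theta, g_t\alpha_\theta)/\log t \leq 1/2$ and $\geq 1/2$ separately. Throughout I will use that this distance is comparable to $-\log \delta'_t(\theta)$, where $\delta'_t(\theta)$ denotes the length of the shortest saddle connection on the Teichm\"uller-deformed surface carrying $g_t\alpha_\theta$. The two inequalities then correspond respectively to the convergence and divergence halves of a Borel--Cantelli argument in the ambient stratum of the moduli space.

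For the upper bound I would rely on the Eskin--Masur--Masur--Smillie-type estimate asserting that the $SL(2,\mathbb{R})$-invariant probability measure $\mu$ on the stratum satisfies $\mu\{\beta : \delta'(\beta) < r\} \leq C r^{2}$. Fixing $\epsilon > 0$ and setting $r_n = n^{-1/2-\epsilon}$, a Fubini argument relating the circle orbit of $\alpha$ to a cross-section for the Teichm\"uller flow shows that the Lebesgue measure of $\{\theta : \delta'_n(\theta) < r_n\}$ is $O(n^{-1-2\epsilon})$. Summing over $n$ and applying the convergence Borel--Cantelli lemma gives that for almost every $\theta$, $\delta'_n(\theta) \geq n^{-1/2-\epsilon}$ for all sufficiently large $n$. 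A standard interpolation using the Lipschitz behavior of $\log\delta'_t$ along Teichm\"uller orbits extends the bound to all real $t$, and letting $\epsilon \to 0$ yields $\limsup d'(t)/\log t \leq 1/2$.

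For the lower bound I would use the divergence half of Borel--Cantelli. Now with $r_n = n^{-1/2}$, the individual measures are of order $1/n$ and $\sum r_n^{2}$ diverges, but genuine independence of the events $\{\delta'_n < r_n\}$ over $\theta$ fails; the substitute is quasi-independence coming from mixing of the Teichm\"uller geodesic flow on the stratum. I would smooth the indicator $\mathbf{1}_{\{\delta' < r\}}$ by a proxy adapted to the thin-part geometry, apply an effective mixing estimate to control correlations between the events at widely separated times, and invoke the Paley--Zygmund form of Borel--Cantelli to conclude that $\{\delta'_n(\theta) < n^{-1/2}\}$ occurs infinitely often for almost every $\theta$, producing $\limsup d'(t)/\log t \geq 1/2$.

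The hard part will be this divergence direction, namely establishing sufficient quasi-independence of thin-part visits in a non-compact moduli space. The standard route, following the Kleinbock--Margulis framework for logarithm laws on homogeneous spaces, is to package $\delta'$ into a proper function whose super-level sets exhaust the stratum, combine effective mixing with Eskin--Masur tail bounds into second-moment estimates on time spent in the thin part, and extract the divergence Borel--Cantelli. Theorem~\ref{thm:integrability1} furnishes a complementary statement, namely that descent into the thin part slower than $\delta_t \gtrsim t^{-1/2}$ already forces unique ergodicity, but pinpointing the exact logarithmic exponent $1/2$ in the $\limsup$ requires these sharper probabilistic inputs on the stratum rather than the pathwise integrability criterion alone.
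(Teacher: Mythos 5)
This statement is not proved in the paper at all: it is quoted verbatim from Masur's work \cite{masur:loglaw} as background for the discussion of Theorem \ref{thm:CheungEskin} and Theorem \ref{thm:integrability1}, so there is no internal proof to compare your sketch against. Judged against Masur's actual published argument, your outline has the right flavor (a two-sided Borel--Cantelli argument pinned to the quadratic exponent) but a genuine structural gap in both halves. The theorem is asserted for \emph{every} fixed Abelian differential $\alpha$, with the almost-everywhere quantifier running only over the circle of directions $\theta$. Your upper bound invokes the measure estimate $\mu\{\beta:\delta'(\beta)<r\}\leq Cr^2$ for the invariant measure on the stratum and then a ``Fubini argument'' to pass to the circle orbit of $\alpha$; but the circle orbit of a single surface has measure zero in the stratum, so no Fubini or cross-section argument can transfer a $\mu$-statement to a statement about Lebesgue-a.e.\ $\theta$ for that fixed $\alpha$. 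The input Masur actually uses is pointwise on the surface: the quadratic upper bound $N_\alpha(R)\leq cR^2$ on the number of saddle connections of length at most $R$, valid for each surface individually, combined with an explicit estimate of the arc of directions $\theta$ for which a given holonomy vector is contracted below $r$ at time $t$. The convergence Borel--Cantelli is then run over these explicit arcs.

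The same objection applies more forcefully to your lower bound. Effective mixing of the Teichm\"uller flow on the stratum, in the Kleinbock--Margulis style, yields quasi-independence for $\mu$-typical basepoints, hence at best a conclusion for $\mu$-a.e.\ $\alpha$ --- not for an arbitrary fixed $\alpha$ as the theorem demands. Masur's divergence argument again stays on the fixed surface: it uses the quadratic \emph{lower} bound $N_\alpha(R)\geq c'R^2$ for $R$ large, the near-disjointness of the corresponding arcs of bad directions at well-separated scales, and a Paley--Zygmund-type second-moment estimate carried out directly on $S^1$, with no appeal to mixing. If you want to salvage your outline, replace both stratum-level measure estimates by the per-surface counting bounds for saddle connections and redo the Borel--Cantelli entirely on the circle of directions; as written, the reduction from the stratum to the fixed surface is the missing step, and it is the heart of the matter. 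Your closing remark is correct, though: Theorem \ref{thm:integrability1} is logically downstream of this kind of statement (it converts slow decay of the systole into unique ergodicity) and cannot be used to produce the exact value $\tfrac12$ of the $\limsup$.
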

The theorem of Cheung and Eskin, with $\epsilon = \frac{1}{2}$, implies that there are flat surfaces which satisfy Masur's Logarithmic Law and whose translation flow is uniquely ergodic. Such surfaces indeed form a full measure set since the set of surfaces carrying uniquely ergodic foliations form a full measure set, as do the ones satisfying Masur's Logarithmic Law. The significance of Theorem \ref{thm:integrability1} is that it draws a sharper line between compact flat surfaces carrying uniquely ergodic foliations and those that do not, especially those which satisfy Masur's Logarithmic Law. We do not know how sharp the value $\epsilon = \frac{1}{2}$ is nor the connection this value may have with Masur's Logarithmic Law.

\begin{question}
Is there a compact flat surface which satisfies Masur's Logarithmic Law but whose horizontal foliation is not uniquely ergodic?
\end{question}

We hope that the results here will be more motivation to understand flat surfaces of infinite genus and finite area. It is becoming increasingly clear that understanding the geometry of non-compact flat surfaces of finite area is crucial to knowing ergodic properties of translation flows defined on them. The inconvenient fact that there is no known moduli space for such surfaces forces us to look for a good class of surfaces whose geometry can be more deeply understood. Of particular interest are parabolic surfaces (that is, surfaces on which there is no Green's function) since they share many of the analytic properties of compact surfaces. Such analytic properties may become useful in analytic approaches such as the ones in this paper. Moreover, parabolic surfaces guarantee that the dynamics of the translation flow are not as trivial as possible (see Remark \ref{rem:measure}). 

The approach in this paper is inspired by proof of the spectral gap for the Kontsevich-Zorich cocycle; compare \cite[Lemma 2.1']{forni:deviation} with Lemma \ref{lem:pos} here, which is a crucial ingredient for all ergodicity theorems. We hope that the ideas here may be useful in proving a criterion for a spectral gap for flat surfaces of infinite genus and finite area using Forni's method \cite[Lemma 2.2]{forni:deviation}. We hope to visit this question in future work.

It is worth pointing out that the techniques in this paper used to establish the ergodicity criteria do not readily yield criteria for weak mixing. We hope to extend the techniques of the proofs in future work to yield criteria for weak mixing.

Section 1 gives background on flat surfaces from a geometric and analytic point of view, as well as background on Veech groups. Section 2 deals with proving the main result, Theorem \ref{thm:main}. In Section 3 we prove the integrability criterion for compact flat surfaces, Theorem \ref{thm:integrability1}.

\begin{ack}
This work was done as part of my PhD dissertation at the University of Maryland, College Park. I am infinitely grateful to my advisor, Giovanni Forni, for suggesting this problem, for his endless support, motivation and patience, and for many very enlightening discussions. This would have never been possible without him.

I want to thank Barak Weiss, Pat Hooper and Josh Bowman for many useful comments and conversations. Finally I want to thank the hospitality of the Universitat Polit\`{e}cnica de Catalunya where some of this work was carried out.
\end{ack}

\section{Flat Surfaces and Veech Groups}

\subsection{Flat structures}

Let $S$ be a Riemann surface with no boundary and $\Sigma\subset \bar{S}$ a discrete set of points, where the surface $\bar{S}$ is some compactification of $S$. The manifold $S$ is a flat surface if it carries an atlas $\{(\mathcal{U}_i,\varphi_i)\}_i$ with $\mathcal{U}_\alpha,\, \mathcal{U}_\beta\subset S\backslash \Sigma$ such that for any two charts $(\mathcal{U}_\alpha,\varphi_\alpha)$ and $(\mathcal{U}_\beta,\varphi_\beta)$, $\varphi_\alpha\circ\varphi^{-1}_\beta(z) = \pm z + c_{\alpha\beta}$ for $z\in\mathcal{U}_\alpha\cap\mathcal{U}_\beta$. If $\varphi_\alpha\circ\varphi^{-1}_\beta(z) = z + c_{\alpha\beta}$ for all $z$ and $\alpha,\,\beta$, then $S$ is a \emph{translation surface}. Otherwise it is a \emph{half-translation surface}. Here we will only be interested in translation surfaces since half-translation surfaces can be studied by passing to an appropriate double cover where they become translation surfaces.

The points which make up $\Sigma$ are the singularities of $S$. Any compact  translation surface $S$ of genus greater that one must, by the Gauss-Bonnet theorem, have overall negative curvature. Since a translation surface has a flat metric everywhere on $S\backslash \Sigma$, any surface of genus greater than one must have its negative curvature concentrated on $\Sigma$. Thus at a point $p\in\Sigma$ the metric can be written in polar coordinates $(r,\theta)$ as $\sqrt{dr^2+(ar\,d\theta)^2)}$, where $2\pi a$ is the cone angle at $p$.

The complex structure of a translation surface can also be completely obtained by an \emph{Abelian differential}, i.e., a holomorphic 1-form. In local coordinates away from $\Sigma$ any Abelian differential can be written as $\alpha = \phi(z)\, dz$, with $\phi$ a holomorphic function, and the metric can be written locally as $R_\alpha = |\alpha| |dz|$ while the area form is given on $S\backslash (\Sigma\cap S)$ by $\omega_\alpha = \Re\, (\alpha)\wedge\Im\,(\alpha)$. Any Abelian differential $\alpha$ comes with a pair of transverse measured foliations, the horizontal and vertical foliations, $\mathcal{F}^h_\alpha$ and $\mathcal{F}^v_\alpha$. They are the foliations generated by the distributions $\mathrm{Ker}\, \Im(\alpha)$ and $ \mathrm{Ker}\,\Re(\alpha)$, respectively.

A flat surface will be denoted as $(S,\alpha)$ which emphasizes the metric and foliated structure imposed on the topological surface $S$ by the Abelian differential $\alpha$. The flat surface $(S,\alpha_\theta)$, where $\alpha_\theta = e^{i\theta}\alpha$, carries the same metric as the flat surface $(S,\alpha)$, but their foliations differ. The foliations on $(S,\alpha_\theta)$ are simply obtained by ``rotating'' the foliations on $(S,\alpha)$ by the angle $\theta$. Sometimes we may refer to $S$ as a flat surface without specification of any Abelian differential. In such case, we mean that we are considering $(S,\alpha_\theta)$ for some $\alpha$ and all $\theta\in S^1$.

For the flat surface $(S,\alpha)$ we will always assume that $\bar{S}$ is the compactification of $S$ obtained through the metric completion of $S$ with respect to the flat metric compatible with $\alpha$.

A \emph{regular} leaf for the vertical or horizontal foliation is a leaf which does not limit to a point in $\Sigma$, i.e., a singularity of $\alpha$. Otherwise it is called \emph{singular}. A \emph{generalized saddle connection} of $\alpha$ is a singular leaf of the vertical or horizontal foliation which limits to two singularities. We remark that in the case of non-compact surfaces, the set of singularities also includes the ideal boundary of $S$, a feature that is not present for compact surfaces. A \emph{regular saddle connection} is a leaf which joins two singularities in $S$. A \emph{cross cut} has the property that that it leaves every compact subset of $S$ in both directions. In other words, a cross cut is a trajectory from the ideal boundary to itself. As such, in some cases, cross cuts may be arbitrarily long, even of infinite length. 

\begin{remark}
\label{rem:measure}
Since we are interested in studying the properties of flows on flat surfaces of finite area, \textbf{we assume throughout the paper that the set of cross cuts forms a set of measure zero} for all surfaces studied, as otherwise this would create an obstruction to ergodicity. This is only relevant for non-compact surfaces. If we assume that our surface is parabolic (i.e. $S$ has no Green's function or, equivalently, the harmonic measure of the ideal boundary of $S$ vanishes) then \cite[Theorems 13.1 and 24.4]{strebel:book} guarantee that almost every point has a recurrent orbit. Parabolicity in a sense implies that the ideal boundary of a non-compact surface is ``small enough'' to have most orbits be recurrent. It can be shown that the non-compact surfaces from \cite{chamanara,bowman,hooper:measures}, for example, are parabolic through the extremal distance criterion \cite[IV.15A]{AS}.
\end{remark}

We denote the set of generalized saddle connections of an Abelian differential on a surface $S$ by $\mathrm{SC}(S,\alpha)$. The horizontal or vertical foliations of an abelian differential are \emph{periodic} if all but perhaps the singular leaves are closed. In such case, by considering $S\backslash \mathrm{SC}(S,\alpha)$, the surface decomposes into a union of cylinders bounded by saddle connections and each cylinder is foliated by homotopically-equivalent closed leaves of the foliation. It may be possible for a surface $S\backslash \mathrm{SC}(S,\alpha)$ to decompose as the disjoint union of periodic components (cylinders), and minimal components.

The length of a saddle connection or homotopically non-trivial simple closed curve with respect to the flat metric can be computed through its horizontal and vertical components. Specifically, for $\ell \in SC(S,\alpha)$, or for $\ell$ a homotopically non-trivial simple closed curve,
\begin{equation}
\label{eqn:length}
\mathrm{length}_\alpha(\ell)^2 =  H_\alpha(\ell)^2 + V_\alpha(\ell)^2,
\end{equation}
where $H_\alpha(\ell) = \int_\ell |\Re(\alpha)|$ and $V_\alpha(\ell) = \int_\ell |\Im(\alpha)|$.


In this paper we deal with flat surfaces of infinite genus and finite area. For such surfaces the set of singularities $\Sigma$ not only consists of finite-angle singularities as in the compact case, but in addition of singularities of infinite angle. This will be of no consequence in the present analysis. These surfaces also carry a translation structure just as in the finite genus case and therefore a (singular) flat metric given by an Abelian differential $\alpha$. The requirement that the surface have finite area is equivalent to the requirement that the norm of the Abelian differential, $\|\alpha\| = \int_S |\alpha|^2\, \omega_\alpha$, is finite.

The vector fields $X$ and $Y$ of norm 1 which, respectively, are tangent to the foliations $\mathcal{F}^{h,v}_\alpha$, commute and in addition have the following properties \cite{forni:1997}:
\begin{enumerate}
\item $\{X,Y\}$ is an orthonormal frame for the tangent bundle $TS$ on $S\backslash\Sigma$ with respect to the metric $R_\alpha$.
\item $X$ and $Y$ preserve the smooth area form $\omega_\alpha$, thus $\eta_X \equiv \imath_X\omega_\alpha$ and $\eta_Y \equiv -\imath_Y\omega_\alpha$ are closed, smooth 1-forms on $S\backslash \Sigma$.
\item $\eta_X$ and $\eta_Y$ generate the measured foliations $\mathcal{F}^{h,v}_\alpha$ on $S\backslash \Sigma$.
\end{enumerate}
The complex structure provided by the Abelian differential $\alpha$ also defines spaces of functions compatible with the induced foliations and the vector fields $X$ and $Y$. We define
\begin{equation}
\label{eqn:L2}
L^2_\alpha(S) = \left\{ u : \int_S |u|^2\,\omega_\alpha\equiv \|u\|^2 <\infty \right\}
\end{equation}
to be the weighted $L^2$ spaces of $S$. These spaces have a natural structure of Hilbert spaces with inner product $(\cdot,\cdot)_\alpha$ defined as
$$(u,v)_\alpha \equiv \int_S u\bar{v}\,\omega_\alpha$$
which satisfies, by the invariance of the of $\omega_\alpha$ under $X$ and $Y$,
\begin{equation}
\label{eqn:comm}
(Xu,v)_\alpha = -(u,Xv)_\alpha\mbox{ and }(Yu,v)_\alpha = -(u,Yv)_\alpha.
\end{equation}
Define the $s-$norm to be
\begin{equation}
\label{eqn:sobolev}
\|u\|_s^2 \equiv  \sum_{i+j\leq s}\|X^iY^ju\|^2.
\end{equation}
Let $H^s_\alpha(S)$ be the completion of the set of smooth functions with finite $\|\cdot\|_s$ norm. We denote by $H^{-s}_\alpha(S)$ the dual space of $H^s_\alpha(S)$.
From the vector fields $X$ and $Y$, we construct the $\emph{Cauchy-Riemann operators}$
\begin{equation}
\label{eqn:CRops}
\partial^\pm_\alpha\equiv X\pm i Y,
\end{equation}
the kernels of which contain the meromorphic, respectively anti-meromorphic, functions which are elements of $L^2_\alpha(S)$. As shown in \cite[Proposition 3.2]{forni:1997}, it follows from (\ref{eqn:comm}) that $(\partial^\pm_\alpha)^*$ are extensions of $-\partial_\alpha^\mp$. It follows by Hilbert space theory that we have the orthogonal splitting
\begin{equation}
\label{eqn:L2split}
L^2_\alpha(S) = \mathrm{Range}(\partial^\pm_\alpha)\oplus^\perp \mbox{ Ker}\,(\partial^\mp_\alpha).
\end{equation}
Finally, the Dirichlet form $Q_\alpha:H^1_\alpha(S)\times H^1_\alpha(S)\rightarrow\mathbb{C}$ is defined as
$$Q_\alpha(u,v) = (Xu,Xv)_\alpha + (Yu,Yv)_\alpha = (\partial^\pm_\alpha u,\partial^\pm_\alpha v)_\alpha.$$
The \emph{Dirichlet norm} of a function $u$ is defined to be $Q_\alpha(u) \equiv Q_\alpha(u,u)$.

\subsection{$SL(2,\mathbb{R})$ action}
\label{subsec:SL2R}

Let $(S,\alpha)$ denote a surface $S$ with a complex structure given by an Abelian differential $\alpha$. There is a well-defined action of the group $SL(2,\mathbb{R})$ on $(S,\alpha)$. For $A\in SL(2,\mathbb{R})$, we define $A\cdot(S,\alpha)$ to be the surface $(S,\alpha)$ with charts post-composed with the action of $A$ on $\mathbb{R}^2$.

The stabilizer of this action is denoted by $Stab(S,\alpha)$ and its image in $PSL(2,\mathbb{R})$ is called the \emph{Veech group} of $(S,\alpha)$. It is usually denoted by $SL(S,\alpha)$ or \emph{Aff}$(S,\alpha)$ since it coincides with the group of derivatives of affine diffeomorphisms (with respect to $\alpha$) of $S$. In other words, if $r\in SL(S,\alpha)$, then there exists a unique affine diffeomorphism $f_r$ with constant derivative $Df_r$ such that the action of $Df_r$ on the complex structure of $(S,\alpha)$ coincides with that of $r$. Such diffeomorphisms will be called \emph{Teichm\"{u}ller maps}.

When $S$ is compact, the Veech group $SL(S,\alpha)$ is always a discrete subgroup and, when $SL(S,\alpha)$ is a lattice, $(S,\alpha)$ is called a \emph{Veech surface}. Usually one expects the Veech group of a surface to be trivial. Thus, surfaces with non-trivial Veech groups turn out to be quite interesting (and are hard to find). The $SL(2,\mathbb{R})$-orbit of $(S,\alpha)$, denoted by $D_{(S,\alpha)}$, is isometric to the unit tangent bundle of the Poincar\'{e} disk $\mathbb{H}$, and is called the \emph{Teichm\"{u}ller disk} of $(S,\alpha)$. The Veech group $SL(S,\alpha)$ acts on $D_{(S,\alpha)}$ by isometries of the hyperbolic metric. The quotient of the Teichm\"{u}ller disk of $(S,\alpha)$ by its Veech group is denoted by
$$H_{(S,\alpha)}\equiv\mathbb{H}/SL(S,\alpha),$$
where $\mathbb{H} = SL(2,\mathbb{R})/SO(2,\mathbb{R})$. The projection map will be denoted by 
$$\Pi_{(S,\alpha)}:D_{(S,\alpha)}\rightarrow H_{(S,\alpha)}.$$ 
We will identify deformations of $(S,\alpha)$ by elements of $SL(w,\mathbb{R})$ with the elements themselves; this makes it natural to talk about the $SL(2,\mathbb{R})$ or Teichm\"{u}ller orbits of $(S,\alpha)$ in $SL(2,\mathbb{R})/SL(S,\alpha)$.

The disk $H_{(S,\alpha)}$ has finite area if, and only if, $(S,\alpha)$ is a Veech surface. However, if $(S,\alpha)$ is compact, $H_{(S,\alpha)}$ is never compact. It is not known whether there exists a flat surface of finite area and infinite genus whose Veech group is not discrete.

It is natural to talk about \emph{elliptic, parabolic,} and \emph{hyperbolic} elements of $SL(2,\mathbb{R})$ corresponding, respectively, to elements with zero, one, and two distinct real eigenvalues. Elliptic elements are conjugate to the elements of the subgroup $SO(2,\mathbb{R})$. Elements of $SO(2,\mathbb{R})$, rotations, will be denoted by $r_\theta$, where $\theta$ is the angle of rotation. Parabolic elements are, in a conveniently-rotated coordinate system, of the form
$$h^t = \left( \begin{array}{cc}
1 & t \\
0 & 1 \end{array} \right) \hspace{.35in} \mbox{and}\hspace{.35in} h_s = \left( \begin{array}{cc}
1 & 0 \\
s & 1 \end{array} \right)$$
for $s,t\in\mathbb{R}$. Parabolic elements generate both parabolic elements and hyperbolic elements. Associated to every parabolic element there corresponds a unique invariant direction corresponding to its eigenvector and we say that the parabolic element \emph{fixes} this direction. Any direction invariant by a hyperbolic element is also said to be fixed by it.

The diagonal subgroup
$$g_t \equiv \left\langle \left( \begin{array}{cc}
e^{-t} & 0 \\
0 & e^{t} \end{array}\right): t\in\mathbb{R} \right\rangle$$
is an important subgroup of $SL(2,\mathbb{R})$. Its action on the Teichm\"{u}ller disk of a flat surface is called the \emph{Teichm\"{u}ller geodesic flow} since it minimizes distances between two points in the Teichm\"{u}ller disk of a flat surface. Its action on the complex structure of $(S,\alpha)$ is also referred to as Teichm\"{u}ller deformation.


\section{Ergodicity for recurrent Teichm\"{u}ller orbits}

Recall that the complex structure of any translation surface is given by an Abelian differential $\alpha$ which defines a commuting pair of vector fields $X$ and $Y$ of norm 1. Let
\begin{equation}
\label{eqn:CStopps}
\partial_t^\pm \equiv e^{t}X \pm ie^{-t}Y = X_t \pm i Y_t
\end{equation}
be the one-parameter family of Cauchy-Riemann operators defined for the complex structure given by the Abelian differential
$$\alpha_t = e^{-t}\Re(\alpha) + i e^{t}\Im(\alpha).$$
In other words, one can think of the operators $\partial^\pm_t$ as the Cauchy-Riemann operators of the surface $g_t\cdot (S,\alpha) = (S,\alpha_t)$. To be consistent with the notation of (\ref{eqn:CRops}), we make the identification $\partial^\pm_t\equiv \partial^\pm_{\alpha_t}$.

Note that the volume form $\omega_{\alpha_t}$ given by $\alpha_t$ is invariant, i.e., $\omega_{\alpha_t} = \omega_\alpha$ for all $t$, and thus the Hilbert space of square integrable functions with respect to $\omega_{\alpha_t}$ is the same for all $t$ (and so are all derived Sobolev spaces $H_{\alpha_t}^s(S)$). There is, however, a one-parameter splitting of $L^2_\alpha$ as in (\ref{eqn:L2split}):
\begin{equation}
\label{eqn:split}
L^2_\alpha(S) = \mathrm{Range}(\partial^\pm_t)\oplus^\perp_t \mbox{ Ker}\,(\partial^\mp_t).
\end{equation}
Thus, for any function $u\in L^2(S)$ and for any $t\in \mathbb{R}$ there exist functions $v_t^\pm\in H^1_\alpha(S)$, meromorphic functions $m_t^-$ and anti-meromorphic functions $m_t^+$ such that
\begin{equation}
\label{eqn:L2tsplit}
u = \partial^+_tv^+_t + m_t^- = \partial ^-_t v^-_t + m^+_t.
\end{equation}
If the surface is compact, the spaces $\mbox{ Ker}\,(\partial^\pm_t)$ are finite dimensional by the Riemann-Roch theorem. For surfaces of infinite genus this is not the case necessarily, but this fact is irrelevant in the discussion.

If we chose each $v_t^\pm$ to have zero average, then the one parameter families $v_t^\pm$ are smooth. So we will assume this without loss of generality. Finally, it is easy to verify that
\begin{equation}
\label{eqn:id}
\overline{\partial^\pm_t f} = \partial^\mp_t \bar{f}.
\end{equation}

To address the issue of ergodicity of the flows (or foliations) generated by $X$ and $Y$, we are interested in studying functions which are $X$-invariant (or $Y$-invariant). Note that if $u\in L^2(S)$ is an $X$-invariant function, i.e., $Xu = 0$, by considering its real part we can study $X$-invariant functions while assuming they are real valued. We also assume that if $u\in L^2$ is a real-valued, invariant function, then $u\in L^\infty$. Indeed, for any invariant $u$, the set $A_u(r) \equiv \{x\in S: |u(x)| > r\}$ is invariant for any $r$, so for our purposes we can work with the function $u' = \chi_{S\backslash A_u(r)}u + \chi_{A_u(r)}$ for some $r$, which implies $\| u'\|_{\infty}<\infty$.

Let us now consider an arbitrary real function $u\in L^2_\alpha(S)$. Since $u$ is real-valued, using (\ref{eqn:L2tsplit}) and (\ref{eqn:id}),
$$u = \partial^+_tv^+_t + m^-_t = \overline{\partial^-_t v^-_t} +\overline{m^+_t} = \partial^+_t\bar{v}^-_t + \overline{m^+_t}= \bar{u},$$
from which, by (\ref{eqn:split}), it follows that 
\begin{equation}
\label{eqn:meroPart}
m^+_t = \overline{m^-_t}.
\end{equation}
Moreover, it also follows from (\ref{eqn:L2tsplit}) and (\ref{eqn:id}) that
$$\partial^+_t(v^+_t-\bar{v}_t^-) = 0$$
in $L^2_\alpha(S)$. In other words, the function $v^+_t-\bar{v}_t^-\in H^1_\alpha(S)$ has zero Dirichlet norm, so it is in the kernel of $Q_\alpha$. Since the kernel of $Q_\alpha$ consists of constant functions and $v_t^\pm$ were chosen to be of zero average, we have that
\begin{equation}
\label{eqn:v's}
v^+_t = \overline{v^-_t}.
\end{equation}

\begin{lemma}
\label{lem:ids}
Let $u\in L^2_\alpha$ be a real-valued, $X$-invariant function on a flat surface of finite area. Then, writing $u$ as in (\ref{eqn:L2tsplit}), we have that $v^+_t$ (and thus $v^-_t$) is purely imaginary and that
$$v^+_t = -v^-_t.$$
\end{lemma}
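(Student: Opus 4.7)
The plan is to combine the two orthogonal decompositions of $u$ with its $X$-invariance to force the real part of $v^+_t$ to have vanishing Dirichlet energy, and therefore to vanish.

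First I would translate the hypothesis $Xu=0$ into an identity involving the twisted Cauchy--Riemann operators. Since $X_t = e^t X$ has the same kernel as $X$, we have $X_t u = 0$, and hence
\begin{equation*}
\partial^+_t u + \partial^-_t u = 2 X_t u = 0
\end{equation*}
in the distributional sense. On the other hand, the splitting (\ref{eqn:split}) gives $\partial^-_t m^-_t = 0$ and $\partial^+_t m^+_t = 0$, so applying $\partial^-_t$ to the first decomposition of $u$ and $\partial^+_t$ to the second produces
\begin{equation*}
\partial^-_t u = \partial^-_t \partial^+_t v^+_t, \qquad \partial^+_t u = \partial^+_t \partial^-_t v^-_t.
\end{equation*}
Because $X$ and $Y$ commute, $\partial^+_t \partial^-_t = \partial^-_t \partial^+_t = X_t^2 + Y_t^2$, so summing the two identities above yields
\begin{equation*}
\partial^-_t \partial^+_t (v^+_t + v^-_t) = 0.
\end{equation*}

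Next I would pair this identity in $L^2_\alpha(S)$ against $f := v^+_t + v^-_t$. By (\ref{eqn:v's}) we have $f = 2\,\Re(v^+_t) \in H^1_\alpha$, a real-valued function with zero average. Using $(\partial^+_t)^* = -\partial^-_t$, which follows from (\ref{eqn:comm}), together with the direct computation $\|\partial^+_t f\|^2 = Q_{\alpha_t}(f)$ valid for real $f$, the pairing produces $Q_{\alpha_t}(f) = 0$. Hence $X_t f = Y_t f = 0$, so $f$ is constant on $S$, and the zero-average normalization then forces $f \equiv 0$. Combined once more with (\ref{eqn:v's}), this gives $v^+_t = -v^-_t = -\overline{v^+_t}$, so $v^+_t$ is purely imaginary as claimed.

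The only delicate point I anticipate is the distributional justification for applying $\partial^\pm_t$ to $u$, since $u$ lies only in $L^2 \cap L^\infty$ and not a priori in $H^1_\alpha$. This poses no real obstruction because $v^\pm_t \in H^1_\alpha$ by construction and the meromorphic parts are annihilated, so all identities above can be interpreted as equalities in $H^{-1}_\alpha$, and the concluding pairing against $f \in H^1_\alpha$ is legitimate within the Sobolev scale generated by $X_t$ and $Y_t$.
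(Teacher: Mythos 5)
Your proposal is correct and follows essentially the same route as the paper: apply $\partial^\mp_t$ to the two decompositions, use $X_tu=0$ to deduce $\triangle_t(v^+_t+v^-_t)=0$, and then kill $\Re(v^+_t)=\tfrac12(v^+_t+v^-_t)$ via the vanishing of its Dirichlet energy together with the zero-average normalization and (\ref{eqn:v's}). Your only (harmless) streamlining is to pair the identity directly against $f$ using $(\partial^+_t)^*=-\partial^-_t$ rather than first remarking that $\Re(v^+_t)$ is harmonic.
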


\begin{proof}
By applying $\partial^\pm_t$ to the decomposition (\ref{eqn:L2tsplit}) we obtain that
\begin{equation}
\label{eqn:1}
\begin{split}
\triangle_t v^+_t & = (\partial^-_t)^2v^-_t + \partial^-_tm_t^+ \\
\triangle_t v^-_t & = (\partial^+_t)^2v^+_t + \partial^+_tm_t^-
\end{split}
\end{equation}
in $H^{-1}_\alpha(S),$ where $\triangle_t = \partial^\pm_t\partial^\mp_t$ is the Laplacian with respect to the complex structure given by $\alpha_t$. Since $X_t = \frac{1}{2}(\partial_t^+ + \partial_t^-)$, then $Xu=0$ implies, by (\ref{eqn:L2tsplit}),
\begin{equation}
\label{eqn:2}
(\partial_t^+)^2v^+_t + \partial^+_tm^-_t + \triangle_tv^+_t = 0 \hspace{.25in}\mbox{ and }\hspace{.25in} (\partial_t^-)^2v^-_t + \partial^-_tm^+_t + \triangle_tv^-_t = 0.
\end{equation}
Putting (\ref{eqn:1}) and (\ref{eqn:2}) together,
$$\triangle_tv_t^+-\partial^-_tm^+_t = -\triangle_tv_t^- - \partial^-_tm^+_t,$$
which implies $\triangle_t(v_t^++v^-_t) = \triangle_t(2\Re(v^+_t)) = 0$. In other words, $\Re(v^+_t)\in H^1_\alpha(S)$ is a harmonic function. Moreover, since $v^\pm_t\in H^1_\alpha(S)$, 
$$Q_\alpha(\Re(v^+_t)) = (\partial^\pm_t \Re(v^+_t),\partial^\pm_t \Re(v^+_t)) = 0,$$ 
i.e., the Dirichlet norm of $\Re(v^+_t)$ is zero. Since the kernel of $Q_\alpha$ consists of constant functions and $v^\pm_t$ can be chosen to be of zero average (without loss of generality), $\Re(v^+_t) = 0$ and the result follows from (\ref{eqn:v's}).
\end{proof}

Using (\ref{eqn:meroPart}) and Lemma \ref{lem:ids}, we can compute the evolution of the norm of $m^\pm_t$.

\begin{lemma}
\label{lem:pos}
Under the splitting (\ref{eqn:L2tsplit}) for a real-valued, $X$-invariant function $u$ on a flat surface of finite area, the evolution of the norm of $m^\pm_t$ is described by
$$ \frac{d}{dt} \|m^\pm_t\|^2 = 4\|\Im(m^\pm_t)\|^2. $$
\end{lemma}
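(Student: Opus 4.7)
The plan is to use Lemma \ref{lem:ids} to make the decomposition (\ref{eqn:L2tsplit}) entirely concrete. Since $v^+_t$ is purely imaginary, write $v^+_t = i w_t$ with $w_t \in H^1_\alpha(S)$ real-valued. Expanding $\partial^+_t v^+_t = (X_t + iY_t)(iw_t) = -Y_t w_t + i X_t w_t$ and substituting into $m^-_t = u - \partial^+_t v^+_t$ yields the explicit formulas
$$\Re(m^-_t) = u + Y_t w_t, \qquad \Im(m^-_t) = -X_t w_t,$$
and in particular $\|m^-_t\|^2 = \|u+Y_t w_t\|^2 + \|X_t w_t\|^2$. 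Because $m^+_t = \overline{m^-_t}$ by (\ref{eqn:meroPart}), the norms $\|m^\pm_t\|$ and $\|\Im m^\pm_t\|$ agree, so it is enough to establish the formula for $m^-_t$ with right-hand side $4\|X_t w_t\|^2$.

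Next, I would extract an elliptic identity for $w_t$. Applying $\partial^-_t$ to the decomposition (\ref{eqn:L2tsplit}), the term $\partial^-_t m^-_t$ vanishes because $m^-_t \in \mathrm{Ker}(\partial^-_t)$, leaving $\triangle_t v^+_t = \partial^-_t u$. Since $Xu=0$ gives $X_t u = 0$, we have $\partial^-_t u = -iY_t u$, and comparing imaginary parts yields
$$\triangle_t w_t = -Y_t u.$$

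I would then differentiate $\|m^-_t\|^2$ term by term using $\dot X_t = X_t$ and $\dot Y_t = -Y_t$. Every appearance of $\dot w_t$ can be moved onto its partner factor using the anti-self-adjointness (\ref{eqn:comm}) of $X_t, Y_t$, and the total contribution of these terms collapses to $-2(Y_t u + \triangle_t w_t, \dot w_t)$, which vanishes by the elliptic identity above. I expect this cancellation to be the conceptual core of the argument: the a priori unknown $\dot w_t$ drops out precisely because $w_t$ is determined by a Poisson equation whose differentiated form is exactly the identity just displayed. What remains is the purely algebraic expression
$$\frac{d}{dt}\|m^-_t\|^2 = -2(\Re m^-_t, Y_t w_t) - 2\|Y_t w_t\|^2 + 2\|X_t w_t\|^2 + 2(u,Y_t w_t) + 2(u,Y_t w_t) - 2(u,Y_t w_t),$$
which simplifies (grouping $u + Y_t w_t = \Re m^-_t$) to $-2(\Re m^-_t, Y_t w_t) + 2\|X_t w_t\|^2$.

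To close, I would use the Cauchy–Riemann equation $\partial^-_t m^-_t = 0$: its real and imaginary parts give $Y_t \Re m^-_t = X_t \Im m^-_t = -X_t^2 w_t$. Two applications of anti-self-adjointness then produce
$$(\Re m^-_t, Y_t w_t) = -(Y_t \Re m^-_t, w_t) = (X_t^2 w_t, w_t) = -\|X_t w_t\|^2,$$
and substituting back gives $\frac{d}{dt}\|m^-_t\|^2 = 2\|X_t w_t\|^2 + 2\|X_t w_t\|^2 = 4\|\Im m^-_t\|^2$, which is the claim.
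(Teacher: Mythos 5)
Your argument is correct, and it takes a genuinely different route from the paper's. The paper computes $\tfrac{d}{dt}\|m^+_t\|^2 = 2\,\mathrm{Re}\,(\tfrac{d}{dt}m^+_t, m^+_t)$ directly: writing $m^+_t = u - \partial^-_t v^-_t$ and using $\tfrac{d}{dt}\partial^\pm_t = \partial^\mp_t$, the unknown $\dot{v}^-_t$ is killed in one stroke because $\partial^-_t\dot{v}^-_t$ lies in $\mathrm{Range}(\partial^-_t)\perp m^+_t$, and the surviving term is converted via Lemma \ref{lem:ids} into $\mathrm{Re}\,(u-m^-_t,m^+_t) = \|m^+_t\|^2 - \mathrm{Re}\int_S (m^+_t)^2\,\omega_\alpha = 2\|\Im(m^+_t)\|^2$. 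You instead make the decomposition concrete through the real potential $w_t$ with $v^+_t = iw_t$, dispose of $\dot{w}_t$ via the Poisson equation $\triangle_t w_t = -Y_t u$, and close with the Cauchy--Riemann system for $m^-_t$. These are two faces of the same cancellation --- your Poisson equation is precisely the statement $\partial^-_t m^-_t = 0$, i.e.\ the orthogonality of the splitting (\ref{eqn:L2tsplit}) that the paper invokes --- but your version makes explicit where the $X$-invariance of $u$ enters (through $\partial^-_t u = -iY_t u$) and identifies $\|\Im(m^\pm_t)\|^2$ concretely as $\|X_t w_t\|^2$, at the cost of more bookkeeping; the paper's version is shorter and never needs the pointwise form of $m^\pm_t$.

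One display in your write-up is garbled: the long intermediate expression for $\tfrac{d}{dt}\|m^-_t\|^2$ does not balance (after substituting $\Re m^-_t = u + Y_t w_t$, the terms $-2\|Y_t w_t\|^2$ and the three copies of $\pm 2(u,Y_t w_t)$ do not cancel to give what you claim). However, the simplified form you extract, $\tfrac{d}{dt}\|m^-_t\|^2 = -2(\Re m^-_t, Y_t w_t) + 2\|X_t w_t\|^2$, is correct --- one sees this by differentiating $\|u+Y_t w_t\|^2 + \|X_t w_t\|^2$ term by term, where the $\dot{w}_t$-contributions assemble into $-2(Y_t u + \triangle_t w_t, \dot{w}_t) = 0$ and the rest is $-2(u+Y_t w_t, Y_t w_t) + 2\|X_t w_t\|^2$ --- so the remainder of your argument, including the final step $(\Re m^-_t, Y_t w_t) = -\|X_t w_t\|^2$, goes through.
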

\begin{proof}
From (\ref{eqn:CStopps}) it follows by a straight-forward calculation that $\frac{d}{dt}\partial^\pm_t = \partial^\mp_t$. We perform the calculation $m^+_t$; the case for $m^-_t$ is essentially the same.
\begin{eqnarray*}
\frac{1}{2}\frac{d}{dt}\|m^+_t\|^2 &=& \mathrm{Re}\, \left(\frac{d}{dt}m^+_t,m^+_t\right) = \mathrm{Re}\,\left(\frac{d}{dt}(u-\partial^-_tv^-_t), m^+_t\right) \\
&=& -\mathrm{Re}\, \left(\frac{d}{dt}\partial^-_tv^-_t, m^+_t\right) = -\mathrm{Re}\,\left(\partial^+_t v^-_t + \partial^-_t\dot{v}^-_t, m^+_t\right) \\
&=& \mathrm{Re}\,(\partial^+_tv^+_t,m^+_t) = \mathrm{Re}\,(u-m^-_t,m^+_t) \\
&=& \|m^+_t\|^2 - \mathrm{Re}\, \int_S (m^+_t)^2\,\omega_\alpha  = 2\|\Im(m^+_t)\|^2.
\end{eqnarray*}
\end{proof}

\begin{definition}
\label{def:recurrent}
The $g_t$ orbit of $(S,\alpha)$ is \emph{recurrent} if for any $\varepsilon>0$ there is an $s_\varepsilon\in\mathbb{R}^+$ and an element $r\in SL(S,\alpha)$, $r\neq \mbox{Id}$, such that the distance between $g_{s_\varepsilon}\cdot(S,\alpha)$ and $r\cdot(S,\alpha)$ is less than $\varepsilon$ in $D_{(S,\alpha)}$.
\end{definition}
This definition gives the usual definition, from the point of view of topological dynamics, of a recurrent orbit on $H_{(S,\alpha)}$. We use this definition since it will be more useful in the proof of ergodicity.

\begin{remark}
\label{rem:recurrent2}
If $(S,\alpha)$ is $g_t$-recurrent, then for any sequence of $\varepsilon_i\rightarrow 0$ there is a sequence of angles $\theta_i$ and times $t_i\rightarrow\infty$ such that the distance between $g_{t_i}\cdot(S,\alpha)$ and $g_{t_i}r_{\theta_i}(S,\alpha)$ is less than $\varepsilon_i$ and $g_{t_i}r_{\theta_i}\in SL(S,\alpha)$. As such, it follows that $r_{\theta_i}\rightarrow \mathrm{Id}$, i.e., $\theta_i\rightarrow 0$. Indeed, since the $SL(2,\mathbb{R})$ orbit of $(S,\alpha)$ is isometric to the unit tangent bundle of the Poincar\'e disk, i.e., a simply connected surface with constant sectional curvature $\kappa = -4$, it follows from the hyperbolic law of sines that 
$$|\sin(\theta_i)| \leq \frac{\sinh(2\varepsilon_i)}{\sinh(2t_i)}.$$
Moreover, for each $k$ there exists an $e_k>0$ such that for all $s\in(t_k-e_k,t_k+e_k)$ we have that 
$$\mbox{dist}(g_s,g_{t_k}r_{\theta_k}) < \varepsilon_k,$$
all of which can be taken to be the $s_{\varepsilon_k}$ from Definition \ref{def:recurrent}. Finally, since $g_{t_i}r_{\theta_i}\in SL(S,\alpha)$, there exists a sequence of affine diffeomorphisms $f_i$ such that $g_{t_i}r_{\theta_i}=Df_i\in SL(S,\alpha)$.
\end{remark}
For a flat surface $(S,\alpha)$ with a recurrent $g_t$ orbit, we will call a sequence of quadruples 
$$\{(t_i, \theta_i, \varepsilon_i, f_i)\}_{i=1}^\infty \in (\mathbb{R}^+\times S^1\times \mathbb{R}^+\times SL(S,\alpha))^\mathbb{N}$$
as in the above remark the \emph{recurrent data} of $(S,\alpha)$. We can assume without loss of generality that $\varepsilon_{i+1}< \varepsilon_i$ for all $i$.

\begin{lemma}
\label{lem:cylinder}
Let $(S,\alpha)$ be a flat surface of finite area whose $g_t$-orbit is recurrent. Then no component of $S\backslash \mbox{SC}(S,\alpha)$ is a cylinder.
\end{lemma}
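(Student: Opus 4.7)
The plan is to argue by contradiction. Suppose some component $C$ of $S \setminus \mathrm{SC}(S,\alpha)$ is a cylinder. Since its boundary lies in $\mathrm{SC}$, which consists only of horizontal and vertical saddle connections, the core of $C$ must be horizontal or vertical; after applying $r_{\pi/2}$ if necessary, take $C$ to be horizontal with circumference $c > 0$, height $h > 0$, and area $A = ch$. Using the recurrent data $\{(t_i,\theta_i,\varepsilon_i,f_i)\}$ of Remark \ref{rem:recurrent2}, I aim to produce infinitely many translates $f_i(C) \subset (S,\alpha)$ that are cylinders of area $A$ with pairwise disjoint interiors, contradicting $\|\alpha\|^2 < \infty$.

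The first step is to compute the geometry of $f_i(C)$. Since $Df_i = g_{t_i} r_{\theta_i}$, we have $Df_i(1,0) = (e^{-t_i}\cos\theta_i,\, e^{t_i}\sin\theta_i)$, and the bound $|\sin\theta_i| \leq \sinh(2\varepsilon_i)/\sinh(2t_i)$ yields both $|Df_i(1,0)| = O(e^{-t_i})$ and a slope $\delta_i$ from horizontal satisfying $|\tan \delta_i| = e^{2t_i}|\tan\theta_i| = O(\varepsilon_i)$. Thus $f_i(C)$ is a cylinder of circumference $O(ce^{-t_i}) \to 0$, height $\sim h e^{t_i} \to \infty$, and core slope $\delta_i$. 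The key geometric fact I would invoke is that any straight segment of nonzero slope $\delta$ crossing the horizontal cylinder $C$ between its two horizontal boundaries has length at least $h/|\sin\delta|$. Applied to a boundary saddle connection of $f_i(C)$ (length $O(ce^{-t_i})$, slope $O(\varepsilon_i)$), entering $\mathrm{int}(C)$ would require $ce^{-t_i} \gtrsim h/\varepsilon_i$, i.e.\ $c\varepsilon_i e^{-t_i} \gtrsim h$, which fails for large $i$. Hence $\partial f_i(C) \cap \mathrm{int}(C) = \emptyset$, so either $f_i(C) \subseteq S \setminus \mathrm{int}(C)$ or $f_i(C) \supseteq C$; the latter, combined with $|f_i(C)| = A = |C|$, would equate the two cylinders as sets, forcing the flat annulus $C$ to carry closed geodesics in two distinct directions (horizontal and $\delta_i \neq 0$) — impossible, since on a flat annulus only the core direction admits closed geodesics. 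Thus $f_i(C) \subseteq S \setminus \mathrm{int}(C)$.

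The same transversality argument applied pairwise shows $\partial f_j(C) \cap \mathrm{int}(f_i(C)) = \emptyset$ for $i \neq j$ large: the height of $f_i(C)$ is $\sim h e^{t_i}$ while $|\delta_j - \delta_i| = O(\varepsilon_i + \varepsilon_j)$, so crossing would demand length $\gtrsim h e^{t_i}/(\varepsilon_i+\varepsilon_j)$, far exceeding the available $O(ce^{-t_j})$. Passing to a subsequence along which the slopes $\delta_i$ are pairwise distinct then yields pairwise disjoint interiors for the $f_i(C)$, whence $\sum_i A \leq \|\alpha\|^2 < \infty$ gives the contradiction. The main obstacle, and what I expect to require the most care, is the degenerate case of coinciding slopes: if $\sin\theta_i = 0$ for some $i$, then $Df_i = g_{t_i} \in SL(S,\alpha)$ is pseudo-Anosov, and iterating $f_i$ on $C$ already produces infinitely many disjoint horizontal cylinders of area $A$ with circumferences $ce^{-nt_i} \to 0$, directly contradicting finite area; any other coincidence $\delta_i = \delta_j$ forces an element of $SL(S,\alpha)$ to fix the horizontal direction, and the same dichotomy permits extracting the desired distinct-slope subsequence.
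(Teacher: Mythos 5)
Your proof is correct and follows essentially the same strategy as the paper's: use the recurrence data to produce affine images of the putative cylinder with exponentially short, nearly horizontal waistcurves, show their interiors are pairwise disjoint, and contradict finite area since each has the same area. Your crossing-length estimate and the separate treatment of coinciding slopes merely make rigorous the disjointness step that the paper argues informally ("the trajectories foliating $C_1$ cannot close up"), so the two arguments are the same in substance.
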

\begin{proof}
  Suppose there is a component $C\subset S\backslash\mbox{SC}(S,\alpha)$ which is a cylinder. Let $w_C$ and $A_C$ be the waisturve and area of $C$, respectively. The Teichm\"{u}ller maps $f_i\in SL(S,\alpha)$ in the recurrent data are affine and therefore take cylinders to cylinders. Define $C_0=C$ and $C_i = f^{-1}_i(C)$ for $i>0$. By applying the Teichm\"{u}ller deformation $g_{t_i}$ and the Teichm\"{u}ller map $f^{-1}_i$ we see that the length $w_{C_i}$ of the waistcurve of cylinder $C_i$ is $e^{-t_i}w_C$. Note that the angle $\theta_i$ between the waistcurves of $C$ and $C_i$ satisfies $\sin(\theta_i)\leq \sinh(\varepsilon_i)/\sinh(2t_i)$. By passing to the appropriate subsequences, we can control how fast the length of the waistcurves of the $C_i$ diminish as well as how small the angle is between waistcurves.

We claim that $\omega_\alpha(C_i\cap C_j) = 0$ for all $i\neq j$. Indeed, let us consider $C_1$. Since the waistcurve of $C_1$ is exponentially smaller than that of $C$ and the angle between the two foliations exponentially small (as remarked above, this can be done by passing to a subsequence if necessary), it follows that the trajectories foliating $C_1$ cannot close up if $\omega_\alpha(C\cap C_1) \neq 0$. By the same token, $\omega_\alpha(C\cap C_2) = 0$ and for the same reasons in fact $\omega_\alpha(C_1\cap C_2) = 0$. Considering this for any $i$, we have that $\omega_\alpha(C_i\cap C_j) = 0$ for all $j<i$. But if these cylinders do not overlap and their area is the same since the Teichm\"{u}ller maps preserve area, it is impossible to fit them all in $S$ since the total area is finite. It therefore follows that there is no component which is a cylinder.
\end{proof}

For a flat surface $(S,\alpha)$ with a recurrent Teichm\"{u}ller orbit, by a sequence of recurrent times, we mean a sequence of real numbers $t_i$ such that $g_{t_i}(S,\alpha)\rightarrow (S,\alpha)$.

\begin{lemma}
\label{lem:normal}
Let $(S,\alpha)$ be a flat surface of finite area whose $g_t$ orbit is recurrent and $u\in L^2_\alpha(S)$ be a real-valued function. Then for any sequence of recurrent times $\{s_i\}$ there is a sequence of affine diffeomorphisms $\{ f_i\}\subset SL(S,\alpha)$ such that the family of functions $\mathbb{F} = \{(f^{-1}_i)^*m^+_{s_i}\}_{i=0}^\infty$, where $m^+_t$ is the $\alpha_t$-meromorphic part of $u$ as in (\ref{eqn:L2tsplit}), is a normal family on $S\backslash (\Sigma\cap S)$.
\end{lemma}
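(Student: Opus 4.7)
The plan is to upgrade the global $L^2_\alpha$-boundedness of the meromorphic components $m^+_{s_i}$, transported by $f_i^{-1}$, to a locally uniform $C^1$ bound on $S\setminus(\Sigma\cap S)$, from which Arzel\`a--Ascoli yields normality. First I would use the recurrent data to pick $f_i\in SL(S,\alpha)$ with $Df_i=g_{s_i}r_{\theta_i}$ and $\theta_i\to 0$. Then $f_i^*\alpha=:\alpha_{s_i,\theta_i}$, the map $f_i$ permutes $\Sigma$ and preserves $\omega_\alpha$, and $f_i^{-1}$ is a flat isometry from $(S,|\alpha|)$ onto $(S,|\alpha_{s_i,\theta_i}|)$. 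The estimate in Remark~\ref{rem:recurrent2} gives $\theta_i\, e^{2s_i}=O(\varepsilon_i)\to 0$, yielding the tangentwise comparison $|\alpha_{s_i,\theta_i}|^2=(1+O(\varepsilon_i))\,|\alpha_{s_i}|^2$ (obtained by expanding $g_{s_i}r_{\theta_i}\alpha$ in the $\alpha$-frame).

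For the global bound, the splitting~\eqref{eqn:split} is $(\cdot,\cdot)_\alpha$-orthogonal, so $\|m^+_{s_i}\|_\alpha\le\|u\|_\alpha$, and area-preservation of $f_i$ preserves this norm: $\|(f_i^{-1})^*m^+_{s_i}\|_\alpha\le\|u\|_\alpha$.

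For the local $C^1$ bound, fix a compact $K\subset S\setminus\Sigma$ with $d:=\mathrm{dist}_\alpha(K,\Sigma)>0$ and let $q=f_i^{-1}(p)$ for $p\in K$. The isometry identity $\mathrm{dist}_{\alpha_{s_i,\theta_i}}(q,\Sigma)=\mathrm{dist}_\alpha(p,\Sigma)\ge d$ combined with the metric comparison above gives $\mathrm{dist}_{\alpha_{s_i}}(q,\Sigma)\ge d/2$ for $i$ large. Since $m^+_{s_i}$ is holomorphic in the $\alpha_{s_i}$-complex structure on $S\setminus\Sigma$, the flat disk $B^{\alpha_{s_i}}_{d/4}(q)$ lies in its holomorphy domain, and the mean-value inequality together with Cauchy's estimate in the local $\alpha_{s_i}$-coordinate $w$ give
\[|m^+_{s_i}(q)|+|\partial_w m^+_{s_i}(q)|\le C(d)\,\|u\|_\alpha.\]
Pulling the derivative bound back to $\alpha$-flat coordinates at $p$ through $f_i^{-1}$, whose constant derivative is $r_{-\theta_i}g_{-s_i}$, the chain rule produces Jacobian coefficients of the form $\cos\theta_i\pm ie^{\pm 2s_i}\sin\theta_i$; each of these is bounded uniformly in $i$ by the same recurrence estimate. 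Hence $(f_i^{-1})^*m^+_{s_i}$ and its first partial derivatives are bounded on $K$ independently of $i$.

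Uniform $C^1$ bounds on compact subsets, together with Arzel\`a--Ascoli and a diagonal extraction over an exhaustion of $S\setminus(\Sigma\cap S)$ by such compact $K$, give the required normality of $\mathbb{F}$. I expect the main obstacle to be the bookkeeping between the three flat structures $\alpha$, $\alpha_{s_i}$, $\alpha_{s_i,\theta_i}$: one must verify that the small angle $\theta_i$, although multiplied by the large exponential factors $e^{\pm 2s_i}$, still produces only bounded distortions both in the metric comparison near $\Sigma$ and in the transfer of the Cauchy estimate from $w$- to $z$-coordinates.
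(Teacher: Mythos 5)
Your proof is correct and follows essentially the same route as the paper's: both rest on the observation (from Remark \ref{rem:recurrent2}) that recurrence forces $e^{2t_i}\sin\theta_i = O(\varepsilon_i)$, so that the conformal distortion between the structure in which $(f_i^{-1})^*m^+_{s_i}$ is meromorphic and the original structure $\alpha$ is uniformly bounded, after which Cauchy estimates on disks of radius comparable to $\mathrm{dist}_\alpha(K,\Sigma)$ give uniform boundedness and equicontinuity, and Arzel\`a--Ascoli concludes. The paper packages the distortion control as a compact family of deformations $\mathcal{K}_{(S,\alpha)}$ rather than your explicit conjugated-matrix computation, but the substance is identical.
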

\begin{proof}
To any sequence of recurrent times $s_i\rightarrow \infty$ we can associate some recurrence data. Let $\{t_i,\theta_i,\varepsilon_i,f_i\}$ be the recurrent data associated to the recurrent times $\{s_i\}$. Since $g_t(S,\alpha)$ is recurrent, it follows that for the compact set 
$$\mathcal{K}_{(S,\alpha)} = \bigcup_{\substack{\theta\in S^1 \\ t\in [0,2\varepsilon_1]}} g_t r_\theta (S,\alpha)$$
in the Teichm\"{u}ller disk of $(S,\alpha)$, we have $\Pi_{(S,\alpha)}(g_{s_i}(S,\alpha))\in \Pi_{(S,\alpha)}(\mathcal{K}_{(S,\alpha)})$. As such, for each $i$ there exists a $\varphi_i\in S^1$ such that the function $F_i \equiv (f^{-1}_i)^* m^+_{s_i}$ is meromorphic on $g_{r_i}r_{\varphi_i}(S,\alpha)$ for some $r_i\leq 2 \varepsilon_i$.

Let $K\subset S\backslash (\Sigma\cap S)$ be a compact set. Since every point in the compact set $\mathcal{K}_{(S,\alpha)}$ represents a deformation of the conformal structure of $S$, the quantity 
$$\delta_K\equiv \min d(K,\Sigma),$$
where the minimum is taken over conformal deformations of $S$ corresponding to points in $\mathcal{K}_{(S,\alpha)}$ and the distance $d$ is taken with respect to $\alpha$, is well defined. Since $\mathcal{K}_{(S,\alpha)}$ is a compact family of deformations, it follows from the Cauchy integral formula (see for example \cite[Theorem 1.2.4]{hormander}) that for any neighborhood $K'$  of $K$ in $S\backslash (\Sigma\cap S)$ there exists a constant $M_{K'}$ such that for all $F_i$ we have that
$$|F_i(z)|\leq M_{K'}\|F_i\|_{L^1_\alpha(K')} \leq M_{K'} \|F_i\|\leq M_{K'} \| u \|$$
for any $z\in K$ and therefore the functions in $\mathbb{F}$ are uniformly bounded on $K$.

Let $p\in K$ and consider a disk $D_i$ of radius $\delta_K/2$ in the conformal structure given by $g_{r_i}r_{\varphi_i}(S,\alpha)$ centered at $p$. It follows by the Cauchy integral formula that
\begin{equation}
\label{eqn:equi}
|F_i(z_1) - F_i(z_2)| \leq \frac{16M_K \|u\|}{\delta_K}|z_1-z_2|
\end{equation}
for any two points $z_1, z_2$ in a disk $D^*_i$ of radius $\delta_K/4$ in the conformal structure given by $g_{r_i}r_{\varphi_i}(S,\alpha)$ centered at $p$. Let $D_K(p)$ be a disk of radius $e^{-\varepsilon_1}\delta_K/4$ in the conformal structure of $(S,\alpha)$. Then $D_K(p)\subset D_i^*$ for all $i$. Therefore, by (\ref{eqn:equi}), $\mathbb{F}$ is equicontinuous in $D_K(p)$ and thus on $K$ since $K$ can be covered by finitely many disks of radius $e^{-\varepsilon_1}\delta_K/4$. The statement then follows from the Arzela-Ascoli theorem.
\end{proof}

Since the norm of $m^\pm_t$ is always bounded, i.e., $0\leq \|m^\pm_t\|\leq \|u\|$, then certainly 
$$\liminf \frac{d}{dt}\|m^\pm_t\|^2= \liminf \|\Im(m^\pm_t)\|^2 = 0$$
 by Lemma \ref{lem:pos}. It will be crucial that along our the sequence of recurrent times $\frac{d}{dt}\|m^\pm_{t_i}\|^2 = 4 \|\Im(m^\pm_{t_i})\|^2 \rightarrow 0$. The following lemma shows that we can always find a sequence of recurrent times  for which this is possible.

\begin{lemma}
\label{lem:im}
Let $u = \partial^\pm_t v^\pm_t + m^\mp_t\in L^2_\alpha$ be a real-valued, $X$-invariant function on a flat surface of finite area $(S,\alpha)$ whose $g_t$ orbit is recurrent. Then there is a sequence $\{t_i\}$ of recurrent times such that $\|\Im(m^\pm_{t_i})\|\longrightarrow 0$ as $t_i\longrightarrow \infty$.
\end{lemma}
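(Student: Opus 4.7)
The plan is to combine the integrability of $\|\Im(m^\pm_t)\|^2$, which follows from Lemma~\ref{lem:pos}, with the recurrence of the Teichm\"uller orbit to extract a sequence of recurrent times on which $\|\Im(m^\pm_t)\|$ decays to zero.

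First I would observe that by Lemma~\ref{lem:pos}, $\frac{d}{dt}\|m^\pm_t\|^2 = 4\|\Im(m^\pm_t)\|^2 \geq 0$, so $t \mapsto \|m^\pm_t\|^2$ is monotonically non-decreasing. The orthogonal decomposition~\eqref{eqn:L2tsplit} gives $\|m^\pm_t\|^2 \leq \|u\|^2$, so this monotone bounded function converges as $t \to \infty$, and therefore
$$
\int_0^\infty \|\Im(m^\pm_t)\|^2\,dt \;=\; \tfrac14\bigl(\lim_{t\to\infty}\|m^\pm_t\|^2 - \|m^\pm_0\|^2\bigr) \;<\; +\infty.
$$
Chebyshev's inequality then implies that for every $\delta > 0$, the set $A_\delta := \{t \geq 0 : \|\Im(m^\pm_t)\|^2 > \delta\}$ has finite Lebesgue measure.

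Next, I would use the recurrence to show that a certain set of recurrent times has infinite Lebesgue measure. Define
$$
R_\varepsilon := \{t \geq 0 : \mathrm{dist}(g_t\cdot(S,\alpha),\,r\cdot(S,\alpha)) < \varepsilon \text{ for some } r \in SL(S,\alpha)\}.
$$
Applied to $\varepsilon_i = 1/i$, Remark~\ref{rem:recurrent2} supplies times $t_i \to \infty$ with $t_i \in R_{\varepsilon_i} \subset R_1$ for $i$ large, so $R_1$ is unbounded. Since the Teichm\"uller geodesic flow is unit-speed in the Teichm\"uller disk, the triangle inequality gives $(t-1, t+1) \subset R_2$ for every $t \in R_1$. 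Passing to a subsequence of $R_1$ with mutual gaps at least $2$ produces infinitely many pairwise disjoint length-$2$ intervals in $R_2$, so $R_2$ has infinite Lebesgue measure.

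Finally, comparing measures, $R_2 \setminus A_{1/n}$ has infinite Lebesgue measure, hence is unbounded, for every $n$. Choose inductively $s_n \in R_2 \setminus A_{1/n}$ with $s_n > s_{n-1}$; each $s_n$ is a recurrent time (since $s_n \in R_2$) and satisfies $\|\Im(m^\pm_{s_n})\|^2 \leq 1/n$, so $\|\Im(m^\pm_{s_n})\| \to 0$. The only delicate step is turning qualitative recurrence into a positive-measure lower bound on $R_2$; this rests on the unit-speed property of the Teichm\"uller flow, which ensures that short neighborhoods of a recurrent time remain in a slightly enlarged recurrence set. Without that flexibility one would be forced to work with the raw recurrent intervals $I_k$ from Remark~\ref{rem:recurrent2}, whose lengths $e_k$ can shrink arbitrarily fast and require a more awkward inductive rate-matching against the tail of the integral.
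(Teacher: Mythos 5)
Your argument rests on the same two pillars as the paper's (the monotonicity and boundedness of $\|m^\pm_t\|^2$ from Lemma \ref{lem:pos}, hence $\int_0^\infty \|\Im(m^\pm_t)\|^2\,dt<\infty$, combined with the stability of recurrence under small time shifts), and the Chebyshev packaging is clean. But there is one genuine defect: the claim that ``each $s_n$ is a recurrent time (since $s_n\in R_2$)'' is false as stated. The paper defines a sequence of recurrent times to be one with $g_{t_i}(S,\alpha)\rightarrow(S,\alpha)$, i.e.\ the distance from $g_{t_i}(S,\alpha)$ to the $SL(S,\alpha)$-orbit must tend to \emph{zero}; this is what Lemma \ref{lem:normal} and Proposition \ref{prop:cor} actually consume (the angles $\theta_i$ and errors $\varepsilon_i$ in the recurrence data must go to $0$). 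Membership in $R_2$ only keeps the orbit within distance $2$ of the Veech-group orbit, so your $s_n$ need not converge back to $(S,\alpha)$ in $H_{(S,\alpha)}$ and the conclusion of the lemma is not obtained.

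The repair is short and stays entirely within your framework: let the recurrence radius shrink with $n$. For each fixed $n$, Remark \ref{rem:recurrent2} makes $R_{1/n}$ unbounded, and the unit-speed triangle-inequality step gives $(t-\tfrac{1}{2n},t+\tfrac{1}{2n})\subset R_{2/n}$ for every $t\in R_{1/n}$, so $R_{2/n}$ still contains infinitely many disjoint intervals of length $1/n$ and has infinite Lebesgue measure. Hence $R_{2/n}\setminus A_{1/n}$ is unbounded, and choosing $s_n$ there with $s_n\rightarrow\infty$ yields $\mathrm{dist}(g_{s_n}(S,\alpha),SL(S,\alpha)\cdot(S,\alpha))<2/n\rightarrow 0$ together with $\|\Im(m^\pm_{s_n})\|^2\leq 1/n$. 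With this modification your proof is correct and is, in substance, the paper's argument: there the given bad subsequence is perturbed by at most $1/\sqrt{n}$ to times $\tau_n$ with $\|\Im(m^\pm_{\tau_n})\|^2<1/\sqrt{n}$, and the recurrence radius $1/\sqrt{n}+\varepsilon_{i_{j_n}}$ is explicitly checked to tend to $0$ --- exactly the point your fixed radius $2$ skips.
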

\begin{proof}
By Remark \ref{rem:recurrent2} we have recurrent data  $\{\varepsilon_i,t_i,\theta_i\}$ such that $\mathrm{dist}(g_{s},g_{t_i}r_{\theta_i})\leq \varepsilon_i\rightarrow 0$ for any $s\in (t_i-e_i,t+e_i)$ for some $e_i>0$. If our sequence has the desired property, we are done. Otherwise suppose there is a subsequence $t_{i_j}$, $j\in\mathbb{N}$, and a number $\delta>0$ such that $\|\Im(m^\pm_{t_{i_j}})\|\geq\delta$ for all $j$.

Since $4\|\Im(m^\pm_t)\|^2 = \frac{d}{dt}\|m^\pm_t\|^2$ is continuous and $\|m^\pm_t\|^2$ bounded, there exists a sequence $\tau_n\rightarrow \infty$ and a further subsequence $t_{i_{j_n}}$ (to insure that $(t_{i_{j_n}}-2/\sqrt{n},t_{i_{j_n}}+2/\sqrt{n})\cap (t_{i_{j_{n+1}}}-2/\sqrt{n+1},t_{i_{j_{n+1}}}+2/\sqrt{n+1}) = \varnothing$), such that
$$|\tau_n - t_{i_{j_n}}| \leq \frac{1}{\sqrt{n}}\hspace{.5 in}\mbox{ and }\hspace{.5 in} \|\Im(m^\pm_{\tau_n})\|^2<\frac{1}{\sqrt{n}}.$$
Then
\begin{eqnarray*}
\mathrm{dist}(g_{\tau_n},g_{t_{i_{j_n}}}r_{\theta_{i_{j_n}}}) &\leq& \mathrm{dist}(g_{\tau_n},g_{t_{i_{j_n}}}) + \mathrm{dist}(g_{t_{i_{j_n}}}, g_{t_{i_{j_n}}} r_{\theta_{i_{j_n}}}) \\
&\leq& \frac{1}{\sqrt{n}} + \varepsilon_{i_{j_n}} \equiv \hat{\varepsilon}_n\longrightarrow 0.
\end{eqnarray*}
Since $g_{t_i}r_{\theta_i}\in SL(S,\alpha)$ for all $i$, $\tau_i$ is another sequence of recurrent times with the desired property.
\end{proof}
\begin{lemma}
\label{lem:norm}
Let $(S,\alpha)$ be flat surface of finite area with a recurrent $g_t$ orbit and recurrent data $\{(t_i, \theta_i, \varepsilon_i, f_i)\}_{i=1}^\infty$. Then, for any $s_i\in (t_i-e_i, t_i+e_i)$ for some $e_i>0$ as in Remark \ref{rem:recurrent2},
\begin{equation}
\label{eqn:ops}
f_i^*\partial_0^\pm = \frac{1}{\cos\theta_i}\left[\partial_{s_i}^\pm f^*_i - f^*_i(e^{s_i+t_i}Y\mp i e^{-(s_i+t_i)}X)\sin\theta_i\right] + f^*_i(\partial_0^\pm - \partial_{s_i-t_i}^\pm).
\end{equation}
Moreover it follows that if $u = \partial_t^-v_t^- + m^+_t\in L^2_\alpha$ is a real, $X$-invariant function, then
$$\partial_0^+ \left( (f^{-1}_i)^* m^+_{s_i} \right)  \longrightarrow 0$$
weakly for any sequence $s_i$ of recurrent times.
\end{lemma}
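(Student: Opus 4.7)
First I plan to establish the identity (\ref{eqn:ops}) by a direct chain-rule computation. Since $f_i$ is affine with $Df_i = g_{t_i} r_{\theta_i}$, the matrix
$$Df_i = \begin{pmatrix} e^{-t_i}\cos\theta_i & -e^{-t_i}\sin\theta_i \\ e^{t_i}\sin\theta_i & e^{t_i}\cos\theta_i \end{pmatrix}$$
governs the differentiation of $f_i^* u$; in particular,
$$X f_i^* u = e^{-t_i}\cos\theta_i\, f_i^*(Xu) + e^{t_i}\sin\theta_i\, f_i^*(Yu),$$
with the analogous identity for $Y f_i^* u$. Substituting these into $\partial_{s_i}^\pm f_i^* u = e^{s_i} X f_i^* u \pm i e^{-s_i} Y f_i^* u$ and rearranging to isolate $f_i^*(\partial_0^\pm u) = f_i^*(Xu) \pm i f_i^*(Yu)$ produces (\ref{eqn:ops}) after regrouping the residuals into the two claimed terms. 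This part is pure bookkeeping.

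For the ``moreover'' statement, set $v_i \equiv (f_i^{-1})^* m_{s_i}^+$, so that $f_i^* v_i = m_{s_i}^+ \in \mbox{Ker}(\partial_{s_i}^+)$ by the splitting (\ref{eqn:split}). Applying (\ref{eqn:ops}) with $\pm = +$ to $v_i$ kills the first term in the bracket, and composing both sides with $(f_i^{-1})^*$ yields
$$\partial_0^+ v_i = -\frac{\sin\theta_i}{\cos\theta_i}\left(e^{s_i+t_i} Y - i e^{-(s_i+t_i)} X\right)v_i + (\partial_0^+ - \partial_{s_i-t_i}^+) v_i,$$
which has the form $a_i\, X v_i + b_i\, Y v_i$ for scalar coefficients $a_i, b_i \in \mathbb{C}$ depending on $t_i, s_i, \theta_i$.

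The central observation is that $a_i, b_i \to 0$. By Remark \ref{rem:recurrent2}, $|\sin\theta_i| \leq \sinh(2\varepsilon_i)/\sinh(2t_i)$, and one may arrange $e_i \to 0$ so that $|s_i - t_i| \to 0$. The dangerous coefficient is $|\sin\theta_i|\cdot e^{s_i+t_i}$, since the exponential factor would blow up for generic $\theta_i$; but the hyperbolic estimate gives the precise cancellation
$$|\sin\theta_i|\cdot e^{s_i+t_i} \;\leq\; \frac{\sinh(2\varepsilon_i)}{\sinh(2t_i)}\cdot e^{2t_i + e_i} \;\longrightarrow\; 0.$$
The remaining coefficients $|\sin\theta_i|\cdot e^{-(s_i+t_i)}$, $|1-e^{s_i-t_i}|$, and $|1-e^{t_i-s_i}|$ tend to zero obviously.

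To conclude, since $\det Df_i = 1$, the map $(f_i^{-1})^*$ is an $L^2_\alpha(S)$-isometry and $\|v_i\| = \|m_{s_i}^+\| \leq \|u\|$ uniformly in $i$. For any test function $\varphi \in H^1_\alpha(S)$, integration by parts via (\ref{eqn:comm}) moves $X$ and $Y$ off of $v_i$:
$$(\partial_0^+ v_i, \varphi)_\alpha = -a_i\, (v_i, X\varphi)_\alpha - b_i\, (v_i, Y\varphi)_\alpha,$$
so Cauchy--Schwarz gives $|(\partial_0^+ v_i, \varphi)_\alpha| \leq (|a_i| + |b_i|)\|u\|\,\|\varphi\|_1 \to 0$, which is the required weak convergence (in $H^{-1}_\alpha(S)$). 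The main obstacle is the coefficient estimate above: without the precise scaling of $\sin\theta_i$ dictated by recurrence in a negatively-curved ambient space, the chain-rule identity (\ref{eqn:ops}) would yield nothing useful.
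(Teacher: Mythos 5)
Your proposal is correct and follows essentially the same route as the paper: the chain-rule derivation of (\ref{eqn:ops}), the key cancellation $|\sin\theta_i|e^{s_i+t_i}\lesssim \sinh(2\varepsilon_i)e^{e_i}\to 0$ from the hyperbolic law of sines, and the $L^2$-isometry of $(f_i^{-1})^*$ are exactly the paper's ingredients. The only (immaterial) difference is organizational: you apply (\ref{eqn:ops}) directly to $v_i$ using $\partial^+_{s_i}m^+_{s_i}=0$ and then integrate by parts against a test function, whereas the paper pairs $(f_i^{-1})^*m^+_{s_i}$ with $\partial_0^-\zeta$ from the outset and kills the corresponding term via the orthogonality $m^+_{s_i}\perp\mathrm{Range}(\partial^-_{s_i})$ — the dual formulation of the same fact.
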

\begin{proof}
Let $\zeta\in H^1(S)$. We will now drop the indices for a while to avoid tedious notation and work under the assumption that $t$ is large while $t-s$, $\varepsilon$, and $\theta$ are small. Since we know exactly how the derivative of $f$ acts, we have
$$\partial^\pm_s f^*\zeta = f^*\left[\cos\theta \partial_{s-t}^\pm + (e^{s+t}Y \pm i e^{-(s+t)}X)\sin\theta \right]\zeta,$$
from which (\ref{eqn:ops}) follows. Using this:
\begin{eqnarray*}
((f^{-1})^*m^+_s, \partial_0^-\zeta) &=& (m^+_s,f^*\partial_0^-\zeta)\\
&=& \sec\theta (m^+_s,\partial_s^-f^*\zeta - \sin\theta f^*(e^{s+t}Y + i e^{-(s+t)}X)\zeta) \\ && \;\; + (m_s^+,f^*(\partial_0^- - \partial_{s-t}^-)\zeta)\\
&=& -\sec\theta (m^+_t,\sin\theta f^*(e^{s+t}Y + i e^{-(s+t)}X)\zeta) \\ && \;\; + (m_s^+,f^*(\partial_0^- - \partial_{s-t}^-)\zeta).
\end{eqnarray*}
Using the estimate from Remark \ref{rem:recurrent2}:
\begin{eqnarray*}
|((f^{-1})^*m^+_s, \partial_0^-\zeta)| &\leq&  e^{s+t} |\sin\theta||((f^{-1})^*m^+_t,(Y+ie^{-2(s+t)}X)\zeta)|\sec\theta \\ 
                              && \;\; + |((f^{-1})*m_s^+,(\partial_0^- - \partial_{s-t}^-)\zeta)|\\
&\leq& \frac{\sinh(2\varepsilon)}{\sinh(2t)}e^{s+t}|((f^{-1})^*m^+_t,(Y+ie^{-2(s+t)}X)\zeta)|\sec\theta\\
                              && + \|m^+_s\|\|(\partial_0^- - \partial_{s-t}^-) \zeta \| \\
&\leq& \sinh(2\varepsilon) e^{|s-t|}\|m^+_t\|\|\zeta\|_1 + 2(1-\cosh(s-t))\|m^+_s\| \|\zeta\|_1\\
&\leq& (e^{\varepsilon}\sinh(2\varepsilon) + 2(1-\cosh(\varepsilon)))\|u\|\|\zeta\|_1.
\end{eqnarray*}
Since $\varepsilon_i\rightarrow 0$, the claim follows.
\end{proof}

From Lemmas \ref{lem:pos}, \ref{lem:im}, and \ref{lem:norm}, we can get the following crucial result.

\begin{proposition}
\label{prop:cor}
Let $(S,\alpha)$ be a flat surface of finite area which is $g_t$-recurrent and $u\in L^2_\alpha(S)$ a real valued, $X$-invariant function of zero average. Then there exists a sequence of recurrent times $t_i\rightarrow\infty$ such that $f^*_i m^+_{t_i} \rightharpoonup 0$ weakly, where $m^+_{t}$ is the meromorphic part of $u$ as in (\ref{eqn:L2tsplit}), and $f_i\in SL(S,\alpha)$ are Teichm\"{u}ller maps associated to the recurrent data of $(S,\alpha)$.
\end{proposition}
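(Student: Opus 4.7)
The strategy is to combine Lemmas \ref{lem:im}, \ref{lem:normal}, and \ref{lem:norm} to extract a subsequential limit of $f_i^* m^+_{t_i}$ that rigidity forces to be identically zero, then upgrade local convergence to weak $L^2$ convergence.

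First, I would apply Lemma \ref{lem:im} to obtain recurrent times $t_i \to \infty$ along which $\|\Im(m^+_{t_i})\| \to 0$, together with associated Teichm\"uller maps $f_i \in SL(S,\alpha)$ from the recurrent data. By Lemma \ref{lem:normal}, the family $\{f_i^* m^+_{t_i}\}$ is normal on $S \setminus (\Sigma \cap S)$; extract a subsequence (still indexed by $i$) converging locally uniformly to some limit $m$ on $S \setminus (\Sigma \cap S)$.

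Next, I would identify $m$. The uniform $L^2$ bound $\|f_i^* m^+_{t_i}\| \leq \|u\|$ guarantees $m \in L^2_\alpha(S)$ and rules out poles, so the local uniform limit $m$ is holomorphic on $S \setminus (\Sigma \cap S)$. Since $f_i$ is area-preserving, pullback is an $L^2$-isometry, so $\|\Im(f_i^* m^+_{t_i})\| = \|\Im(m^+_{t_i})\| \to 0$; lower semicontinuity together with the local uniform convergence forces $\Im(m) \equiv 0$, i.e., $m$ is real-valued. By Lemma \ref{lem:norm} (applied to the appropriate pullback), $\partial_0^+ m = 0$ in the distributional sense; combining with $m$ real and identity (\ref{eqn:id}), $\partial_0^- m = \overline{\partial_0^+ \bar m} = 0$ as well. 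Hence $Xm = Ym = 0$, and $m$ is constant on the connected set $S \setminus (\Sigma \cap S)$.

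To pin down the constant, I use that $u$ has zero average and $v^-_t$ was chosen of zero average; since $X_t,Y_t$ preserve $\omega_\alpha$, integration of $u = \partial^-_t v^-_t + m^+_t$ gives $\int_S m^+_t \,\omega_\alpha = 0$, and area-preservation by $f_i$ transfers this to $\int_S f_i^* m^+_{t_i}\, \omega_\alpha = 0$. The uniform $L^2$ bound lets me extract a further subsequence converging weakly in $L^2_\alpha(S)$ to some $m' \in L^2_\alpha(S)$; testing against characteristic functions of compact subsets of $S \setminus (\Sigma \cap S)$ (and using that $\Sigma$ has zero $\omega_\alpha$-measure) identifies $m' = m$ a.e. Testing weak convergence against the constant $1 \in L^2_\alpha(S)$ (finite area) yields $\int_S m \,\omega_\alpha = 0$, so the constant $m$ must vanish, and $f_i^* m^+_{t_i} \rightharpoonup 0$ weakly along the extracted subsequence.

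The main obstacle is reconciling the two modes of convergence — local uniform from the normal family with global weak $L^2$ from boundedness — and carrying the zero-average constraint through; finite area is what makes $1 \in L^2_\alpha(S)$ a legitimate test function. The decisive rigidity ingredient is using identity (\ref{eqn:id}) to promote one-sided holomorphicity (from Lemma \ref{lem:norm}) plus real-valuedness (from Lemmas \ref{lem:pos} and \ref{lem:im}) into joint vanishing of $X$ and $Y$, which is what collapses the limit to a constant on the connected surface.
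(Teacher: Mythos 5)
Your proof is correct and follows essentially the same route as the paper's: extract recurrent times via Lemma \ref{lem:im} so that $\|\Im(m^+_{t_i})\|\to 0$, use Lemma \ref{lem:norm} to see that the weak $L^2$ limit lies in $\mathrm{Ker}\,\partial_0^+$, conclude that this limit is a real-valued meromorphic function and hence constant on the connected set $S\setminus(\Sigma\cap S)$, and kill the constant with the zero-average condition transported by the area-preserving maps $f_i$. The only deviation is your detour through the normal family of Lemma \ref{lem:normal} to produce a locally uniform limit and then reconcile it with the weak $L^2$ limit; the paper works directly with the weak limit here (reserving Lemma \ref{lem:normal} for the uniform estimates in Proposition \ref{prop:rec}), but your reconciliation step is sound, so this is only a harmless redundancy.
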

\begin{proof}
Let $u$ be a real-valued, $X$-invariant function of zero average. Writing it as in (\ref{eqn:L2tsplit}),
$$u = \partial^+_tv^+_t + m_t^- = \partial ^-_t v^-_t + m^+_t.$$
Note that the norm $\| m_t^\pm\|$ is always bounded by the norm of $u$ and by Lemma \ref{lem:pos} is a non-decreasing function of $t$. Let $\{ c_i\}\equiv \left\{  (f^{-1}_i)^* m^+_{s_i} \right\}$ for $i\in\mathbb{N}$ be a sequence of functions on $(S,\alpha_0)$, where the $f_i$ and $s_i\in (t_i-e_i,t_i+e_i)$ are as in Remark \ref{rem:recurrent2}. Since $\left\|(f^{-1}_i)^* m^+_{s_i}\right\| = \|m^+_{s_i}\|\leq \|u\|$ for all $i$, the sequence $\{c_i\}$ is bounded. Therefore, there exists a function $m_*^+$ and weakly convergent subsequence $\{ c_{i_j}\}$ such that $(f^{-1}_{i_j})^* m^+_{s_{i_j}} \rightharpoonup m^+_*$.

We can assume, by Lemma \ref{lem:im}, that (since $f^*_{i}$ is unitary)
\begin{equation}
\label{eqn:im2}
\|\Im\left( (f^{-1}_i)^* m^+_{s_i} \right) \|\longrightarrow 0
\end{equation}
as $i\rightarrow\infty$. By Lemma \ref{lem:norm}, $m^+_*$ is meromorphic
 and by (\ref{eqn:im2}) it has zero imaginary part and thus it is a constant. Since $u$ has zero average, $\int_S m_t\,\omega_\alpha = 0$ for all $t$. It follows from this that 
$$\int_S (f^{-1}_i)^*m^+_{s_i}\,\omega_\alpha = 0$$
for all $i$ since the Jacobian of $f_i$ is identically 1 for all $i$. Thus, since $m^+_*$ is a constant of zero average, it is identically zero.
\end{proof}

\begin{proposition}
\label{prop:rec}
Let $(S,\alpha)$ be a flat surface of finite area whose $g_t$ orbit is recurrent. Then the translation flow is ergodic.
\end{proposition}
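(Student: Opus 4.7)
The plan is to show that any real-valued, bounded, zero-average, $X$-invariant $u \in L^2_\alpha(S)$ is identically zero; the reductions in the earlier part of the section allow us to make these assumptions. First I would fix the Hodge-type decomposition $u = \partial^-_t v^-_t + m^+_t$ from (\ref{eqn:L2tsplit}); orthogonality gives the identity $(u, m^+_t)_\alpha = \|m^+_t\|^2$, and Lemma \ref{lem:pos} makes $\|m^+_t\|^2$ nondecreasing in $t$ and bounded by $\|u\|^2$. Apply Proposition \ref{prop:cor} to obtain recurrent times $t_i \to \infty$ and affine Teichm\"uller maps $f_i \in SL(S,\alpha)$ with $f_i^* m^+_{t_i} \rightharpoonup 0$ weakly in $L^2_\alpha$; by Lemma \ref{lem:normal} the convergence is in fact uniform on every compact subset of $S \setminus \Sigma$.

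Next I would combine these ingredients to force $\|m^+_{t_i}\| \to 0$. Since $f_i$ preserves $\omega_\alpha$, unitarity gives $\|m^+_{t_i}\|^2 = (f_i^* u,\, f_i^* m^+_{t_i})_\alpha$, and the $L^\infty$ bound on $u$ yields
$$\|m^+_{t_i}\|^2 \leq \|u\|_\infty\,\|f_i^* m^+_{t_i}\|_{L^1_\alpha}.$$
A compact exhaustion controls the $L^1$ norm: for $\varepsilon>0$, choose a compact $K \subset S \setminus \Sigma$ with $\omega_\alpha(S \setminus K) < \varepsilon$; then
$$\|f_i^* m^+_{t_i}\|_{L^1_\alpha} \leq \|f_i^* m^+_{t_i}\|_{L^\infty(K)}\,\omega_\alpha(K) + \omega_\alpha(S \setminus K)^{1/2}\,\|u\|,$$
where the first term vanishes in $i$ by uniform convergence and the second is at most $\varepsilon^{1/2}\|u\|$ by Cauchy-Schwarz. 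Letting $\varepsilon\to 0$ shows $\|m^+_{t_i}\|^2 \to 0$, and the monotonicity of $\|m^+_t\|^2$ then forces $\|m^+_t\| = 0$ for every $t \in \mathbb{R}$.

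The hard part will be concluding $u \equiv 0$ from $m^+_t \equiv 0$: this says $u \in \mathrm{Range}(\partial^-_t)$ for all $t$, and combined with Lemma \ref{lem:ids} it produces a real $X$-invariant function $\phi \in H^1_\alpha(S)$ with $Y\phi = u$. Ruling out a nontrivial such global primitive $\phi$ is where the recurrence hypothesis must bite a second time, since for example on a flat torus in the periodic horizontal direction (where the Teichm\"uller orbit escapes to the cusp, so Proposition \ref{prop:cor} does not apply) such a $\phi$ does exist. I anticipate iterating the argument: applying the preceding construction to a bounded truncation of $\phi$ should yield a further $X$-invariant potential $\phi_2$ with $Y\phi_2 = \phi$, and so on, giving a tower $\phi_0 = u, \phi_1 = \phi, \phi_2, \ldots$ with $Y\phi_{k+1} = \phi_k$. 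Anti-self-adjointness of $Y$ enforces $(\phi_k,\phi_{k+1})_\alpha = 0$ and $(\phi_{k+1},\phi_{k-1})_\alpha = -\|\phi_k\|^2$, so Cauchy-Schwarz yields log-convexity of $\|\phi_k\|$; together with the finite area of $S$ and the recurrence of its Teichm\"uller orbit, this should be incompatible with a nontrivial tower, forcing $u \equiv 0$.
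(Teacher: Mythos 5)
The first two thirds of your argument --- the reduction to a bounded, zero-average, real $X$-invariant $u$, the decomposition $u = \partial^-_t v^-_t + m^+_t$, the use of Lemma \ref{lem:pos}, Proposition \ref{prop:cor} and Lemma \ref{lem:normal} to upgrade weak convergence to locally uniform convergence, and the splitting of $\|m^+_{t_i}\|^2 = (u, m^+_{t_i})_\alpha$ over a compact set and its small-measure complement --- is essentially identical to the paper's proof, up to and including the conclusions $m^+_t \equiv 0$ and $u = \pm Y_t\phi$ with $\phi$ real, $X$-invariant and in $H^1_\alpha(S)$.

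The endgame is where you depart from the paper, and as sketched it has a genuine gap. First, the tower is not known to exist: to produce $\phi_2$ you must re-run the machinery on $\phi$, but the $L^\infty$ truncation you invoke replaces $\phi$ by a different function, so the exact relation $Y\phi_2 = \phi_1$ needed for your orthogonality identities is destroyed; moreover each potential produced by the construction is a $v^-_t$ depending on $t$, and that dependence would have to be controlled at every level of the tower. Second, even granting the tower, log-convexity of $k\mapsto\|\phi_k\|$ yields no contradiction: a log-convex positive sequence either blows up (and nothing bounds $\|\phi_k\|$ a priori) or is nonincreasing, and the latter is perfectly consistent with a nontrivial tower. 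You correctly sense that recurrence must enter a second time, but you never identify the mechanism. The paper's route is concrete and elementary: since $\phi$ is $X$-invariant and lies in $H^1_\alpha$, on any flow box $K_p(w,h)$ it is a function of the $Y$-coordinate alone, so the one-dimensional Sobolev embedding makes it continuous on $S\setminus\Sigma$; Lemma \ref{lem:cylinder} (which is where Teichm\"{u}ller recurrence is used again) excludes cylinders, so by Strebel's theorem and the standing assumption of Remark \ref{rem:measure} almost every point $p$ has a recurrent $X$-orbit; at such a point the transverse difference quotient of $\phi$ can be evaluated along returns $\varphi^X_{t_k}(p)\to p$, where $\phi$ takes the same value by $X$-invariance, forcing $u(p) = \pm Y\phi(p) = 0$ almost everywhere. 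This is exactly the step your torus example probes, and it is the cylinder exclusion plus pointwise recurrence of the translation flow, not a norm inequality, that closes it.
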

\begin{proof}
Let $u$ be a real-valued, $X$-invariant function of zero average. Writing it as in (\ref{eqn:L2tsplit}),
$$u = \partial^+_tv^+_t + m_t^- = \partial ^-_t v^-_t + m^+_t.$$

Let $t_i$ be a sequence of return times such that the conclusion of Proposition \ref{prop:cor} holds. Consider an exhaustion $K_0\subset K_1\subset K_2\subset \cdots $ of $S\backslash (\Sigma\cap S) = \bigcup K_n$ by compact sets $K_n$ such that $\omega_\alpha (K_n) \geq 1-\frac{1}{n}$ and consider the sequences of functions $F^n_k = (f_{n_k}^{-1})^* m_{t_{n_k}}$ which, by Lemma \ref{lem:normal}, converge uniformly on $K_n$. By Proposition \ref{prop:cor}, for each $n$, $F^n_k\rightarrow 0$ uniformly as $k\rightarrow \infty$ on $K_n$. Therefore, for every $n$ and $\delta > 0$, there exists an $N_\delta$ such that
\begin{equation}
\label{eqn:lbd}
\|(f^{-1}_{n_k})^* m^+_{t_{n_k}} \|_{L^\infty_\alpha(K_n)} < \delta 
\end{equation}
for all $k>N_\delta$. Equivalently,
$$B_{n,k}\equiv \| m^+_{t_{n_k}} \|_{L^\infty_\alpha(f_{n_k}(K_n))}< \delta.$$
Now let $\varepsilon >0$ and choose $n$ big enough so that $\omega_\alpha(S\backslash K_n)< \varepsilon$. Then
\begin{equation}
\label{eqn:superbound}
\begin{split}
\|m^+_{t_{n_k}}\|^2 &= \int_{K_n}(f^{-1}_{n_k})^*(m^+_{t_{n_k}} u)\,\omega_\alpha + \int_{S\backslash K_n} (f^{-1}_{n_k})^*(m^+_{t_{n_k}} u)\,\omega_\alpha \\
&= \int_{f_{n_k}(K_n)}m^+_{t_{n_k}} u\,\omega_\alpha + \int_{f_{n_k}(S\backslash K_n)} m^+_{t_{n_k}} u\,\omega_\alpha \\
&\leq B_{n,k}\|u\|_{L^1_\alpha(K_n)} + \|u\|_\infty \int_{f_{n_k}(S\backslash K_n)} |m^+_{t_{n_k}}| \,\omega_\alpha \\
&\leq \varepsilon \|u\|_{L^1_\alpha(K_n)} +  \omega_\alpha(S\backslash K_n)^{\frac{1}{2}} \|m^+_{t_{n_k}}\| \|u\|_\infty \\
&\leq \sqrt{\varepsilon}(\sqrt{\varepsilon}+\|u\|_\infty) \|u\|
\end{split}
\end{equation}
for $k>N_\varepsilon$ as in (\ref{eqn:lbd}). This implies that $\|m^+_t\|$ can be arbitrarily small for arbitrarily large values of $t$. It follows by Lemma \ref{lem:pos} that $m^+_t \equiv 0$ for all $t$. Moreover we have $u = \partial_t^\pm v_t^\pm$ for some $v^\pm_t \in H^1_\alpha(S)$. Since $u$ is real and, by Lemma \ref{lem:ids}, $v^\pm_t$ imaginary, $u = \partial^\pm_t v^\pm_t = X_tv^\pm_t \pm i Y_t v^\pm_t$ implies that $v^\pm_t$ is $X$-invariant.

For a point $p\in S\backslash\Sigma$ and $w,h >0$, a \emph{$(w,h)$-rectangle} for $p$ is defined as
$$K_p(w,h) = \bigcup_{\substack{s\in(-w,w)\\t\in(-h,h)}}  \varphi^X_s\circ\varphi^Y_t(p),$$
where $\varphi^{X,Y}$ are the respective flows generated by $X$ and $Y$. It is well defined for any $p\in S\backslash\Sigma$ if $w$ and $h$ are chosen small enough. Since $v^\pm_t$ is $X$-invariant, its restriction to any $(w,h)$-rectangle $K_p(w,h)$ for some point $p$ is a function of one variable, namely, the $Y$-coordinate. Thus by the Sobolev embedding theorem we have that since $v^\pm_t\in H^1_\alpha(K_p(w,h))$, $v^\pm_t$ is a continuous function on $\overline{K_p(w,h)}$. 

Let $\{K_{p_i}(w_i,h_i) \}_{i\in \mathbb{N}}$ be an open cover of $S\backslash\Sigma$. By the Sobolev embedding theorem $v^\pm_t$ is continuous on each $\overline{K_{p_i}(w_i,h_i)}$, and so it is continuous on $S$. If $\varphi^X_t$ is minimal, since $v^\pm_t$ is $X$-invariant and continuous, then $v^\pm_t$ is constant and thus $u=0$ and we conclude that the flow is ergodic.

Otherwise supposed that the flow is not minimal. Since by Lemma \ref{lem:cylinder} there are no cylinders, then by \cite[Theorem 13.1]{strebel:book} and Remark \ref{rem:measure} the set of orbits of infinite length which are recurrent in forwards and backwards time form a set of full measure. Let $p$ be a recurrent point and $\beta_p$ an orthogonal transversal with $p$ as and endpoint. Since $p$ is a recurrent point, there exists a sequence of times $t_k\rightarrow \infty$ such that 
\begin{equation}
\label{eqn:transversal}
 \lim_{k\rightarrow\infty} \varphi^X_{t_k}(p) = p \hspace{.5 in} \mbox{ and } \hspace{.5 in} \varphi^X_{t_k}(p) \cap \beta_p \neq \varnothing 
\end{equation}
for all $k$, and the distance between $p$ and $\varphi^X_{t_k}(p)$ is decreasing. Therefore we can compute the value of $u$ at $p$. Since $v_t^\pm$ is continuous and $\varphi^X$-invariant, for any fixed $\tau>0$
\begin{equation}
\label{eqn:recurrentPoint}
\begin{split}
u(p) = \pm i Y_\tau v_\tau^\pm(p) &= \lim_{k\rightarrow\infty} -\frac{v_\tau^\pm(p) - v_\tau^\pm\circ \varphi^X_{t_k}(p)}{|p - \varphi^X_{t_k}(p)|} \\
             &= \lim_{k\rightarrow\infty} -\frac{v_\tau^\pm(p) - v_\tau^\pm(p)}{|p -\varphi^X_{t_k}(p)|} = 0.
\end{split}
\end{equation}
Since $p$ was recurrent and the recurrent points form a set of full measure, $u$ is constant almost everywhere.
\end{proof}

\begin{proof}[Proof of Theorem \ref{thm:main}]
If the $g_t$ orbit of the flat surface $(S,\alpha)$ is recurrent, then the theorem is proved by Proposition \ref{prop:rec}. Therefore it remains to prove the theorem for flat surfaces which are not recurrent but nonetheless have a limit point $\ell$ in a compact set $\Lambda\subset H_{(S,\alpha)}$.

Consider a fundamental domain $\hat{\Lambda}$ of the action of $SL(S,\alpha)$ on $SL(2,\mathbb{R})/SO(2,\mathbb{R})$, let $\hat{\ell}$ be the point on this domain which projects to $\ell$: $\ell = \Pi_{(S,\alpha)} (\hat{\ell})$ and consider $S\in\hat{\Lambda}$ representing $(S,\alpha)$. There exist numbers $s,t\in\mathbb{R}$ such that $h^tg_s S = \hat{\ell}$. Consider the flat surface $h^tg_s(S,\alpha)$. It has the following properties:
\begin{enumerate}
\item It is in the stable horocycle of the $g_T$ orbit of $(S,\alpha)$. Therefore, the distance on $SL(2,\mathbb{R})/SL(S,\alpha)$ between $g_{T+s}(S,\alpha)$ and $g_T h^t g_s(S,\alpha)$ goes to zero as $T\rightarrow \infty$ since $SL(S,\alpha)$ acts by isometries.
\item The horizontal foliation of $(S,\alpha)$ and that of $h^t g_s (S,\alpha)$ are the same. This follows from the fact that $g_s$ and $h^t$ parametrize the stable horocycle of any point in $SL(2,\mathbb{R})$, meaning that the horizontal foliation of any point in the stable horocycle limits to the same projective horizontal foliation under the geodesic (Teichm\"{u}ller) flow.
\end{enumerate}
It follows from property (i) above that the $g_T$ orbit of $h^t g_s (S,\alpha)$ is recurrent. Indeed, since there is a sequence of times $\{T_i\}_0^\infty$ such that $g_{T_i}\rightarrow \ell$ and $g_T h^t g_s$ is asymptotic to $g_T$ as $T\rightarrow \infty$, it follows that $h^t g_s (S,\alpha)$ has a recurrent $g_T$ orbit. Therefore, by Proposition \ref{prop:rec}, the horizontal foliation of $h^t g_s (S,\alpha)$ is ergodic. Moreover, by property (ii) above, the horizontal foliation of $h^t g_s (S,\alpha)$ is the same as that of $(S,\alpha)$. Therefore, since it is ergodic for $h^t g_s (S,\alpha)$, it is ergodic for $(S,\alpha)$.
\end{proof}

\begin{definition}
The $g_t$ orbit of $(S,\alpha)$ is \emph{periodic} if there exists an $s$ such that $g_s\in SL(S,\alpha)$. The number $s$ is the \emph{period} of $(S,\alpha)$.
\end{definition}

Suppose $(S,\alpha)$ is $g_t$ periodic with period $T$. Then there exists a unique affine diffeomorphism $f:S\rightarrow S$ such that $Df$ can be identified with $r\equiv g_T\in SL(S,\alpha)$.
Any periodic orbit $g_t(S,\alpha)$ is obviously recurrent and thus by Theorem \ref{thm:main} has an ergodic horizontal foliation. By considering the orbit $g_{-t}r_{\pi/2}(S,\alpha)$ for $t\geq 0$ and (\ref{eqn:ops}), then the same theorem gives us the ergodicity of the vertical foliation.
\begin{corollary}
\label{cor:periodic}
Let $(S,\alpha)$ be a flat surface of infinite genus and finite area which is $g_t$-periodic. Then the flows generated by $X$ and $Y$ on $S$ are ergodic.
\end{corollary}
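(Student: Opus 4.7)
The plan is as follows. First I would observe that $(S,\alpha)$ being $g_t$-periodic with period $T$ means precisely that $g_T\in SL(S,\alpha)$, so the Teichm\"uller orbit projects to a closed loop in $SL(2,\mathbb{R})/SL(S,\alpha)$. This makes the orbit recurrent in the sense of Definition \ref{def:recurrent}: for any $\varepsilon>0$ one may simply take $r = g_T$ (or any $g_{nT}$ with $n\in\mathbb{N}$), which is a non-identity element of $SL(S,\alpha)$, and $s_\varepsilon$ close to $T$; then $g_{s_\varepsilon}\cdot(S,\alpha)$ lies within $\varepsilon$ of $r\cdot(S,\alpha)$ in $D_{(S,\alpha)}$. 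Proposition \ref{prop:rec} then immediately delivers ergodicity of the horizontal flow $\varphi^X$.

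For the vertical flow $\varphi^Y$, my approach is to apply the same machinery to the rotated surface $r_{\pi/2}(S,\alpha)$, whose horizontal vector field coincides with $Y$ on $(S,\alpha)$, since rotation by $\pi/2$ interchanges the horizontal and vertical foliations. The key ingredient is the $SL(2,\mathbb{R})$ identity
$$g_t\, r_{\pi/2} = r_{\pi/2}\, g_{-t},$$
from which the $g_t$-orbit of $r_{\pi/2}(S,\alpha)$ equals the $r_{\pi/2}$-image of the backwards Teichm\"uller orbit of $(S,\alpha)$. Since $g_{-T}=g_T^{-1}\in SL(S,\alpha)$, the backwards orbit of $(S,\alpha)$ is periodic as well, and one checks by direct conjugation that $SL(r_{\pi/2}(S,\alpha)) = r_{\pi/2}\, SL(S,\alpha)\, r_{\pi/2}^{-1}$. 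Consequently the forward Teichm\"uller orbit of $r_{\pi/2}(S,\alpha)$ projects to a closed loop in its own Teichm\"uller disk quotient, so it is recurrent. Applying Proposition \ref{prop:rec} once more yields ergodicity of the horizontal flow of $r_{\pi/2}(S,\alpha)$, i.e.\ of $\varphi^Y$ on $(S,\alpha)$.

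There is no substantive obstacle here: once Proposition \ref{prop:rec} is in hand, the corollary is a bookkeeping consequence of the symmetry $r_{\pi/2}\, g_t\, r_{\pi/2}^{-1} = g_{-t}$, which geometrically implements the swap between horizontal and vertical on the surface side. The reference to equation (\ref{eqn:ops}) in the sketch preceding the corollary is meant to record that the analytic apparatus of Lemmas \ref{lem:pos}, \ref{lem:im}, and \ref{lem:norm} transfers verbatim to the rotated complex structure, because the Cauchy-Riemann operators transform in a controlled way under the $SL(2,\mathbb{R})$-action; no new estimates are required.
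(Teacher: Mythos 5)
Your proposal is correct and follows essentially the same route as the paper: periodicity gives recurrence trivially (take $r=g_{nT}$ in Definition \ref{def:recurrent}), so Proposition \ref{prop:rec} handles the horizontal flow, and the vertical flow is handled by rotating by $r_{\pi/2}$, using the conjugation $r_{\pi/2}\,g_{-t}\,r_{\pi/2}^{-1}=g_t$ together with the transformation law (\ref{eqn:ops}) for the Cauchy--Riemann operators — exactly the observation the paper makes by considering the orbit $g_{-t}r_{\pi/2}(S,\alpha)$. No gaps.
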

If $SL(S,\alpha)$ is a lattice (meaning $SL(2,\mathbb{R})/SL(S,\alpha)$ has finite volume), then by the Poincar\'{e} recurrence theorem, almost every orbit is recurrent. This translates into the following corollary.
\begin{corollary}
Let $(S,\alpha)$ be a flat surface of finite area such that $SL(S,\alpha)$ is a lattice. Then the translation flow is ergodic in almost every direction.
\end{corollary}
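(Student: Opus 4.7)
The plan is to apply Theorem \ref{thm:main} to the rotated surfaces $r_\theta\cdot(S,\alpha)$ for Lebesgue-a.e. $\theta\in S^1$: since the horizontal foliation of $r_\theta\cdot(S,\alpha)$ is, up to relabeling, the direction-$\theta$ foliation of $(S,\alpha)$, ergodicity of the former for a.e. $\theta$ gives the corollary. The whole task reduces to showing that for a.e. $\theta$ the $g_t$-orbit of $r_\theta\cdot(S,\alpha)$ in $X:=SL(2,\mathbb{R})/SL(S,\alpha)$ does not leave every compact set.

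I would prove this by passing to the hyperbolic-geometry picture. Quotienting out the compact $SO(2)$-direction, the $g_t$-orbit of $r_\theta\cdot(S,\alpha)$ descends to the hyperbolic geodesic $\gamma_\theta$ in the finite-area hyperbolic orbifold $H_{(S,\alpha)}=\mathbb{H}/SL(S,\alpha)$ issuing from the base point in the direction determined by $\theta$. Because $SO(2)$-fibers are compact, divergence in $X$ is equivalent to divergence in $H_{(S,\alpha)}$. The classical thick-thin decomposition for finite-volume hyperbolic orbifolds then says that $\gamma_\theta$ leaves every compact set of $H_{(S,\alpha)}$ in forward time if and only if its forward endpoint on $\partial\mathbb{H}$ is a parabolic fixed point of $SL(S,\alpha)$.

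Since $SL(S,\alpha)$ is a lattice, it has only finitely many cusp classes, so its set of parabolic fixed points on $\partial\mathbb{H}$ is countable. Fixing a lift $\hat{x}_0\in\mathbb{H}$ of the base point, the visual map $\theta\mapsto\xi_+(\theta)\in\partial\mathbb{H}$ sending a direction at $\hat{x}_0$ to the forward endpoint of the corresponding geodesic is a smooth diffeomorphism $S^1\to\partial\mathbb{H}$, so it carries Lebesgue null sets to Lebesgue null sets. Consequently the set $\{\theta:\xi_+(\theta)\text{ is parabolic}\}$ has Lebesgue measure zero, and Theorem \ref{thm:main} then yields ergodicity of the translation flow on $r_\theta(S,\alpha)$ for a.e. $\theta$.

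The delicate ingredient is the hyperbolic characterization of divergent geodesics by parabolic endpoints, which is standard for finite-volume hyperbolic surfaces and extends verbatim to orbifolds. An alternative route, closer in spirit to the remark preceding the corollary, would combine Poincar\'e recurrence for the $g_t$-action on $(X,\mu)$ (with Haar measure $\mu$ finite) with a Fubini argument in the Iwasawa parameterization $SL(2,\mathbb{R})=KAN$ and a conjugation step to transfer the a.e. statement from $X$ back to the $SO(2)$-orbit of $(S,\alpha)$; however, the hyperbolic argument above pinpoints the exceptional null set most transparently.
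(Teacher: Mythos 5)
Your proof is correct, but it takes a genuinely different route from the paper's. The paper disposes of this corollary in one sentence: since $SL(S,\alpha)$ is a lattice, $SL(2,\mathbb{R})/SL(S,\alpha)$ has finite Haar volume, so by the Poincar\'e recurrence theorem almost every $g_t$-orbit is recurrent, and Theorem \ref{thm:main} applies. As you implicitly recognize in your closing remark, that argument as literally stated only gives recurrence for Haar-almost-every point of the three-dimensional homogeneous space, whereas the corollary concerns almost every point of the single $SO(2)$-orbit $\{r_\theta(S,\alpha)\}$, which has Haar measure zero; transferring the ``almost every'' statement requires the extra Fubini/absolute-continuity step you sketch. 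Your main argument sidesteps this entirely: passing to $H_{(S,\alpha)}=\mathbb{H}/SL(S,\alpha)$ (legitimate, since the $SO(2)$-fibration is proper, so divergence upstairs and downstairs coincide), the thick-thin decomposition of a finite-volume hyperbolic orbifold identifies the divergent geodesics as exactly those whose forward endpoint is a parabolic fixed point, and since a lattice has finitely many cusp classes this set of endpoints is countable; the visual map being a diffeomorphism of $S^1$ onto $\partial\mathbb{H}$, the exceptional set of directions is countable, hence null. This buys you something the paper's argument does not: the set of non-ergodic directions covered by the criterion is shown to be countable, not merely of measure zero, and the exceptional directions are pinpointed as those fixed by parabolics (which is consistent with the paper's subsequent discussion of the Veech dichotomy). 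The one ingredient you should cite or prove is the equivalence ``divergent in forward time $\Leftrightarrow$ parabolic forward endpoint'' for finite-volume quotients, but this is indeed standard and holds for orbifold quotients by lattices exactly as you say.
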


Veech \cite{veech:dich} was the first to notice that if the group of affine automorphisms of a surface (now known as the Veech group) is big enough, then the translation flow on it is reminiscent to the case on the flat torus: it is either completely periodic or uniquely ergodic. This dichotomy is referred to as \emph{the Veech dichotomy}. More specifically, for a Veech surface, that is, for a closed flat surface of finite genus for which $SL(S,\alpha)$ is a lattice in $SL(2,\mathbb{R})$, this dichotomy holds.

A modern proof of the Veech dichotomy hardly relies on the fact that it is coming from a surface of finite genus. It does depend, however, on the nice characterization of limit sets of trajectories available when the flat surface is compact and of finite genus. Suppose that the Veech group of a flat surface of infinite genus and finite area $(S,\alpha)$ is a lattice. If the $g_t$ orbit does not leave every compact set of $SL(2,\mathbb{R})/SL(S,\alpha)$, the horizontal foliation of $(S,\alpha)$ is ergodic by Theorem \ref{thm:main}. 

If the $g_t$ orbit leaves every compact subset of $H_{(S,\alpha)}$, then $g_t(S,\alpha)$ limits to a cusp of $H_{(S,\alpha)}$ and, therefore (see \cite[\S 1.3-1.4]{pascal:veech}), the horizontal foliation of $(S,\alpha)$ is preserved by a parabolic element of $h\in SL(S,\alpha)$. Having deeper knowledge of the structure of limit sets of trajectories may help in completing the proof for the Veech dichotomy in the case of flat surfaces of infinite genus and finite area by following the proof for compact surfaces (see \cite{pascal:veech} for a proof).

\begin{question}
\label{2}
Is there a flat surface of infinite genus and finite area whose Veech group is of the first kind? Is there one such surface whose Veech group is a lattice? If there is, is it true that the horizontal foliation is ergodic or completely periodic?
\end{question}

\section{An Ergodicity Criterion}
\label{sec:diverging}

In this section we will give the proof of Theorem \ref{thm:integrability2}. We will use the same notation as in the previous section, namely the notation $g_t$ used to express Teichm\"{u}ller deformation. 

\begin{proof}[Proof of Theorem \ref{thm:integrability1}]
Let $u\in L^2_\alpha(S)$ be a real-valued, $X$-invariant function of zero average. By (\ref{eqn:L2tsplit}), we can write as
$$u = \partial^\pm_t v^\pm_t + m^\mp_t,$$
and $m^\pm_t = R_t\pm iI_t$. We will start by showing, as in Proposition \ref{prop:rec}, that $u$ is constant when we assume (\ref{eqn:integrability}). Fix $\eta>0$.

For a fixed $t>0$, consider a point $z^*\in S_{\varepsilon(t), t}$. By the Poisson integral formula we have
$$X_tI_t(z_t) = \frac{1}{2\pi}\int_0^{2\pi}\left( \frac{Re^{i\tau}}{(Re^{i\tau} - z_t)^2} + \frac{Re^{-i\tau}}{(Re^{-i\tau} - \bar{z}_t)^2} \right) I_t(Re^{i\tau})\, d\tau,$$
where $z_t$ is a local coordinate inside a $\alpha_t$-disk of radius $R$ centered at $z^*$ for some $R<\epsilon(t)$. In particular, for $z_t = z^*$,
 $$X_tI_t(z^*) = \frac{1}{\pi}\int_0^{2\pi}R^{-1}\cos(\tau)I_t(Re^{i\tau})\, d\tau.$$
Integrating both sides of this expression,
\begin{eqnarray*}
\int_{\varepsilon(t)/3}^{2\varepsilon(t)/3}X_tI_t(z^*)R^2 \, dR &=& \frac{1}{\pi}  \int_{\varepsilon(t)/3}^{2\varepsilon(t)/3} \int_0^{2\pi}\cos(\tau)I_t(Re^{i\tau})\, R d\tau dR  \\
&=& \frac{7\varepsilon(t)^3}{81}X_tI_t(z^*),
\end{eqnarray*}
from which we can compute the bound
\begin{equation}
\label{eqn:derBnd}
\begin{split}
|X_t I_t(z^*)| &\leq \frac{81}{7\varepsilon(t)^3\pi}\|I_t\|_{L^2\left(A_{\frac{\varepsilon(t)}{3},\frac{2\varepsilon(t)}{3}}\right)}\omega_{\alpha}\left(A_{\frac{\varepsilon(t)}{3},\frac{2\varepsilon(t)}{3}}\right)^{\frac{1}{2}}  \\ 
&\leq \frac{4}{\varepsilon(t)^2} \|I_t\|_{L^2\left(A_{\frac{\varepsilon(t)}{3},\frac{2\varepsilon(t)}{3}}\right)}  \leq \frac{4}{\varepsilon(t)^2} \|I_t\|,
\end{split}
\end{equation}
where $A_{a,b}$ is the annulus of inner and outer radius $a$ and $b$, respectively. The same computation yields the same bound for $Y_t I_t (z^*)$. Since these bounds only depended on the fact that $z^*\in S_{\varepsilon(t),t}$, by the Cauchy-Riemann equations,
\begin{equation}
\label{eqn:gradient}
\begin{split}
\|\nabla_t I_t\|_{L^\infty(S_{\varepsilon(t),t})} &\leq \frac{8}{\varepsilon(t)^2} \|I_t\|_{L^2_\alpha(S)}, \\
\|\nabla_t R_t\|_{L^\infty(S_{\varepsilon(t),t})} &\leq \frac{8}{\varepsilon(t)^2} \|I_t\|_{L^2_\alpha(S)},
\end{split}
\end{equation}
where $\nabla_t$ denotes the gradient with respect to $\alpha_t$.

Any point $z\in (S\backslash S_{\varepsilon(t),t})\backslash\Sigma$ at a distance $\varrho< \varepsilon(t)$ from $\Sigma$, by (\ref{eqn:derBnd}), satisfies
$$| \nabla R_t(z)|\leq \frac{8}{\varrho^2}\|I_t\| \hspace{.3 in }\mbox{ and }\hspace{.3 in } | \nabla I_t(z)|\leq \frac{8}{\varrho^2}\|I_t\|.$$

Let $z_i\in S^i_t, z_j\in S_t^j$ with $i\neq j$ and $\gamma:[0,1] \rightarrow S$ be a smooth path from $z_i$ to $z_j$ with $\gamma^{-1}(\gamma([0,1])\cap S\backslash S_{\varepsilon(t),t}) = (a,b)$, for some $0<a<b<1$, $\gamma([0,a])\subset S_i$, $\gamma([b,1])\subset S_j$ and $\mbox{dist}_t(\gamma([0,1]),\Sigma)\geq \delta_t$. By the estimate above,
\begin{equation}
\label{eqn:thinBd}
|R_t(z_i) - R_t(z_j)|   \leq 8 \int_0^1 \left| \frac{ds}{\mbox{dist}_t(\gamma(s),\Sigma)^2}\right| \|I_t\| \leq \frac{16}{\delta_t}\|I_t\|.
\end{equation}
The same bound holds for $|I_t(z_1) - I_t(z_2)|$.

Let $z_1, z_2\in S_{\varepsilon(t), t}$. Combining (\ref{eqn:gradient}) and (\ref{eqn:thinBd}),
\begin{equation}
\label{eqn:Rbound}
|R_t(z_1) - R_t(z_2)| \leq 16 \left(\frac{1}{\varepsilon(t)^2}\sum_{i=1}^{C_t} \mathcal{D}_t^i + \frac{C_t-1}{\delta_t} \right)\|I_t \|.
\end{equation}
The same bound holds for $|I_t(z_1)-I_t(z_2)|$.

We claim that 
\begin{equation}
\label{eqn:vanish}
\liminf_{t\rightarrow \infty} \left(\frac{1}{\varepsilon(t)^2}\sum_{i=1}^{C_t} \mathcal{D}_t^i + \frac{C_t-1}{\delta_t} \right)\|I_t \| = 0.
\end{equation}
Otherwise, there exists a $\lambda >0 $ such that 
$$0<\lambda < \left(\frac{1}{\varepsilon(t)^2}\sum_{i=1}^{C_t} \mathcal{D}_t^i + \frac{C_t-1}{\delta_t} \right)\|I_t \|$$
for all $t$ or, equivalently, 
$$\lambda \left(\frac{1}{\varepsilon(t)^2}\sum_{i=1}^{C_t} \mathcal{D}_t^i + \frac{C_t-1}{\delta_t} \right)^{-1} < \|I_t\|.$$
Squaring both sides and integrating with respect to $t$, by (\ref{eqn:integrability2}) and Lemma \ref{lem:pos},
$$\infty = \lambda^2\int_0^\infty  \left(\frac{1}{\varepsilon(t)^2}\sum_{i=1}^{C_t} \mathcal{D}_t^i + \frac{C_t-1}{\delta_t} \right)^{-2} \, dt \leq \int_0^\infty \|I_t\|^2\, dt \leq \|u\|^2<\infty,$$
a contradiction. Therefore (\ref{eqn:vanish}) holds.

It follows from (\ref{eqn:Rbound}) and (\ref{eqn:vanish}) that there exist arbitrary large values of $t$ such that 
\begin{equation}
\label{eqn:RinfBound}
\|m^\pm_t\|_{L^\infty(S_{\varepsilon(t),t})} < \eta.
\end{equation}

We can now bound $\|m_t^\pm\|$ as in (\ref{eqn:superbound}). Let $t_n\rightarrow \infty$ be a subsequence such that 
$$ \left(\frac{1}{\varepsilon(t_n)^2}\sum_{i=1}^{C_{t_n}} \mathcal{D}_{t_n}^i + \frac{C_{t_n}-1}{\delta_{t_n}} \right)\|I_{t_n} \| \rightarrow 0$$
and choose $n$ big enough so that, by (\ref{eqn:RinfBound}), $\|m^\pm_{t_n}\|_{L^\infty(S_{\varepsilon(t_n),t_n})} < \eta$. As in (\ref{eqn:superbound}),
\begin{equation}
\label{eqn:superbound2}
\begin{split}
\|m^+_{t_n}\|^2 &= \int_{S_{\varepsilon(t_n),t_n}}m^+_{t_n} u\,\omega_\alpha + \int_{S\backslash S_{\varepsilon(t_n),t_n}} m^+_{t_n} u\,\omega_\alpha \\
     &\leq \|m^\pm_{t_n}\|_{L^\infty(S_{\varepsilon(t_n),t_n})} \|u\|_{L^1_\alpha(S_{\varepsilon(t_n),t_n})} + \|u\|_\infty \int_{S\backslash S_{\varepsilon(t_n),t_n}} |m^+_{t_n}| \,\omega_\alpha \\
&\leq \eta \|u\|_{L^1_\alpha(S_{\varepsilon(t_n),t_n})}+  \omega_\alpha(S\backslash S_{\varepsilon(t_n),t_n})^{\frac{1}{2}} \|m^+_{t_n}\| \|u\|_\infty \\
&\leq \sqrt{\eta}(\sqrt{\eta}+\|u\|_\infty) \|u\|.
\end{split}
\end{equation}
Therefore we can bound $\|m_t^\pm\|$ by any $\eta>0$ for arbitrarily large values of $t$. It follows by Lemma \ref{lem:pos} that $m^+_t \equiv 0$ for all $t$. Moreover we have $u = \partial_t^\pm v_t^\pm$ for some $v^\pm_t \in H^1_\alpha(S)$. Since $u$ is real and, by Lemma \ref{lem:ids}, $v^\pm_t$ imaginary, $u = \partial^\pm_t v^\pm_t = X_tv^\pm_t \pm i Y_t v^\pm_t$ implies that $v^\pm_t$ is $X$-invariant.

For a point $p\in S\backslash\Sigma$ and $w,h >0$, a \emph{$(w,h)$-rectangle} for $p$ is defined as
$$K_p(w,h) = \bigcup_{\substack{s\in(-w,w)\\t\in(-h,h)}}  \varphi^X_s\circ\varphi^Y_t(p),$$
where $\varphi^{X,Y}$ are the respective flows generated by $X$ and $Y$. It is well defined for any $p\in S\backslash\Sigma$ if $w$ and $h$ are chosen small enough. Since $v^\pm_t$ is $X$-invariant, its restriction to any $(w,h)$-rectangle $K_p(w,h)$ for some point $p$ is a function of one variable, namely, the $Y$-coordinate. Thus by the Sobolev embedding theorem we have that since $v^\pm_t\in H^1_\alpha(K_p(w,h))$, $v^\pm_t$ is a continuous function on $\overline{K_p(w,h)}$. Let $\{K_{p_i}(w_i,h_i) \}$ be an open cover of $S\backslash\Sigma$. By the Sobolev embedding theorem $v^\pm_t$ is continuous on each $\overline{K_{p_i}(w_i,h_i)}$, and so it is continuous on $S$. 

The starting integrability assumption (\ref{eqn:integrability}) forbids the horizontal foliation of $(S,\alpha)$ to have a cylinder. Otherwise, if $(S,\alpha)$ had a cylinder, either $\varepsilon(t)$ or $\delta_t$ (or both) need to exhibit exponential decay in order to satisfy condition (i), but this comes at the expense of making the integral in (\ref{eqn:integrability}) diverge.

Since there are no cylinders and, by assumption, the set of points whose orbits diverge have negligible measure, by the Poincar\'{e} recurrence theorem, almost every orbit is recurrent. For a recurrent point $p$, let $\beta_p$ be an orthogonal transversal with $p$ as an endpoint satisfying (\ref{eqn:transversal}). We can calculate the value of $u(p)$ by the computation in (\ref{eqn:recurrentPoint}). Therefore, for $\omega_\alpha$-almost every point, $u(p) = 0$ and the horizontal flow is ergodic with respect to the Lebesgue measure $\omega_\alpha$.
\end{proof}

\begin{proof}[Proof of Theorem \ref{thm:integrability1}]
The proof that (\ref{eqn:integrability}) implies ergodicity in this case follows the proof of Theorem \ref{thm:integrability2} by letting $\varepsilon(t) \equiv \varepsilon_0$ for some fixed and small $\varepsilon_0$, letting the set $S_{\varepsilon(t),t}$ be defined as
$$S_{\varepsilon(t),t}\equiv \{z\in S: \mbox{dist}_t(z,\Sigma)\geq \varepsilon_0 \},$$
where $\mbox{dist}_t$ is the distance on $S$ with respect to the metric given by $\alpha_t$, by noting that the number of components of $S_{\varepsilon(t), t}$ is uniformly bounded in $t$, and by noting that by a result of Masur and Smillie \cite[Corollary 5.6]{MasurSmillie-dim}, there is a constant $K$, depending only of the stratum of Abelian differentials to which $\alpha$ belongs, such that if $D_t$ is the diameter of $(S,\alpha_t)$, $\delta_t(S,\alpha)$ the systole of $(S,\alpha_t)$, and $D_t(S,\alpha)>\sqrt{2/\pi}$, then 
\begin{equation}
D_t(S,\alpha)\leq \frac{K}{\delta_t(S,\alpha)}.
\end{equation}
As such, the quantity $\delta_t$ in Theorem \ref{thm:integrability2} becomes half the systole $\delta_t$ in Theorem \ref{thm:integrability1}. It then follows that condition (\ref{eqn:integrability2}) becomes (\ref{eqn:integrability}) in this special case. We leave the details to the reader.

It remains to show that the Lebesgue measure $\omega_\alpha$ is the only invariant measure for the translation flow. Let us suppose that $\mu$ is another invariant measure. Since the flow is minimal, $\mu$ is not atomic. The measures $\omega_\alpha$ and $\mu$ are mutually singular. Define the one parameter family of $X$-invariant measures $\mu_s = s\omega_\alpha + (1-s)\mu$ for $s\in [0,1]$. If we show that $\mu_s$ is an ergodic measure for some $s\in(0,1)$, then, since $\mu_s$ is a convex combination of ergodic measures, this will contradict the fact that ergodic measures are extremal points in the Choquet simplex of invariant measures and therefore imply unique ergodicity.

Let $s\in (0,1)$ be fixed. For $p\in S\backslash\Sigma$, define $L_{\eta}^\alpha(p)$ to be the leaf of the vertical foliation of $\alpha$ of length $\eta$ (with respect to the metric compatible with $\alpha$) with $p$ at its ``bottom'' point. In other words,
$$L^\alpha_{\eta}(p) = \bigcup_{\tau\in[0,\eta]}\phi^Y_\tau(p).$$
Let $R_{\varepsilon,\eta}^\alpha(p)$ be the rectangle with $p$ at its lower-left corner with vertical and horizontal side-lengths $\eta$ and $\varepsilon$, respectively, with respect to the metric compatible with $\alpha$. More precisely,
$$R_{\varepsilon,\eta}^\alpha(p) = \bigcup_{\substack{r\in[0,\varepsilon]\\ \tau\in[0,\eta]}} \phi^X_r\circ\phi^Y_\tau(p).$$
Without loss of generality we implicitly assume that $\varepsilon$ and $\eta$ are small enough such that $R_{\varepsilon,\eta}(p)\cap\Sigma = \varnothing$.

We define a transverse measure to $\mathcal{F}_\alpha^h$, $\Upsilon_s$, defined by
$$\Upsilon_s(L_{\eta}^\alpha(p)) = \lim_{\varepsilon\rightarrow0}\frac{1}{\varepsilon}\mu_s(R_{\varepsilon,\eta}^\alpha(p)).$$
It follows that
$$\mu_s(R_{\varepsilon,\eta}^\alpha(p)) = s\varepsilon\eta + (1-s)\mu(R_{\varepsilon,\eta}(p))\geq s\varepsilon\eta$$
and therefore that
\begin{equation}
\label{eqn:vertLength}
\Upsilon_s(L_\eta^\alpha(p)) = s\eta + \lim_{\varepsilon\rightarrow 0}\frac{1}{\varepsilon}(1-s)\mu(R_{\varepsilon,\eta}(p))\geq s\eta.
\end{equation}

Construct the homeomorphism $\Phi_s: S\rightarrow S$ as follows. For a point in $z\in S\backslash \Sigma$ and using $w = x+iy$ as a local coordinate (identifying $z$ with zero in these local coordinates), the map $\Phi_s$ takes the local form 
$$\Phi_s:w\mapsto \Phi_s(w) = x+sign(y)\cdot i \Upsilon_s(L_{y}^\alpha(z)).$$ 
The map $\Phi_s$ induces a map on the measure $\mu_s$ which sends it to $\mu^s_*\equiv (\Phi_s)_*\mu_s $, the Lebesgue measure in $\Phi_s(S)$. Moreover, the map induces a new translation structure on $S$ and preserves the smooth foliation $\mathcal{F}_\alpha^h$. Considering the transverse foliations $\mathcal{F}^h_\alpha$ and $\mathcal{F}^v_\alpha$ with respect to this new translation structure obtained through the map $\Phi_s$, by \cite{HubbardMasur}, there exists a unique Abelian differential $\alpha_s$ such that $\mathcal{F}^{h,v}_\alpha = \mathcal{F}^{h,v}_{\alpha_s}$. By construction, $\mu_*^s = \omega_{\alpha_s}$.


We now compare the systole $\delta_t$ of $(S,\alpha_t)$ with the systole $\delta_t^s$, the systole of the flat surface $g_t(S,\alpha_s)$. For some fixed $t>0$, let $\ell_t \in SC(S,e^{i\theta}\alpha_t)$, for some $\theta$, such that $\mathrm{length}_{\alpha_t}(\ell_t) = \delta_t$, and let $\ell^s_t = \Phi_s(\ell_t) \in SC(S,e^{i\theta} g_t \alpha_s)$. Note that this implies that $\mathrm{length}_{g_t \alpha_s}(\ell_t^s) = \delta_t^s$.

Since $\Phi_s$ preserves the smooth foliation $\mathcal{F}^h_\alpha$ along with its smooth transverse measure, $V_{\alpha_t}(\ell_t) = V_{g_t \alpha_s}(\ell^s_t)$. On the other hand, by (\ref{eqn:vertLength}), $H_{g_t \alpha_s}(\ell^s_t)\geq sH_{\alpha_t}(\ell_t)$. Therefore, by (\ref{eqn:length}), we have the bound $\delta_t^s \geq s \delta_t$. Since this argument works for any $t$, the bound holds for all $t$. By (\ref{eqn:integrability}),
$$\int_0^\infty (\delta_t^s)^2\, dt \geq s^2 \int_0^\infty \delta_t^2\, dt = \infty,$$
and therefore the horizontal flow on the flat surface $(S,\alpha_s)$ is ergodic with respect to the Lebesgue measure $\omega_{\alpha_s} = \mu_*^s = s(\Phi_s)_*\omega_\alpha + (1-s)(\Phi_s)_*\mu$. But this contradicts that ergodic measures are not convex combinations of other ergodic measures. Therefore, $\omega_\alpha$ is the only $X$-invariant probability measure.
\end{proof}

\bibliographystyle{amsalpha}
\bibliography{biblio}
\end{document}